\begin{document}
\theoremstyle{plain}
\newtheorem{theorem}{{\bf Theorem}}[section]
\newtheorem{corollary}[theorem]{Corollary}
\newtheorem{lemma}[theorem]{Lemma}
\newtheorem{proposition}[theorem]{Proposition}
\newtheorem{remark}[theorem]{Remark}

\theoremstyle{definition}
\newtheorem{defn}{Definition}
\newtheorem{definition}[theorem]{Definition}
\newtheorem{example}[theorem]{Example}
\newtheorem{conjecture}[theorem]{Conjecture}
\def\im{\mathop{\rm Im}\nolimits}
\def\dom{\mathop{\rm Dom}\nolimits}
\def\rank{\mathop{\rm rank}\nolimits}
\def\ker{\mathop{\rm ker}\nolimits}
\def\nullset{\mbox{\O}}

\def\implies{\; \Longrightarrow \;}

\def\GR{{\cal R}}
\def\GL{{\cal L}}
\def\GH{{\cal H}}
\def\GD{{\cal D}}
\def\GJ{{\cal J}}

\def\set#1{\{ #1\} }
\def\z{\set{0}}
\def\Sing{{\rm Sing}_n}
\def\nullset{\mbox{\O}}

\title{On certain semigroups of partial
contractions of a finite chain }
\author{\bf Abdullahi Umar and M. M. Zubairu \footnote{Corresponding Author. ~~Email: $aumar@pi.ac.ae$} \\[3mm]
\it\small Department of Mathematics, The Petroleum Institute, Sas Nakhl,\\
\it\small Khalifa University of Science and Technology, P. O. Box 2533 Abu Dhabi, UAE\\
\it\small  \texttt{aumar@pi.ac.ae}\\[3mm]
\it\small  Department of Mathematics, Bayero  University Kano, P. M. B. 3011 Kano Nigeria\\
\it\small  \texttt{mmzubairu.mth@buk.edu.ng}\\
}
\date{\today}
\maketitle\

\begin{abstract}
 Let $[n]=\{1,2,\ldots,n\}$ be a finite chain and let  $\mathcal{P}_{n}$  be the semigroup of partial transformations on $[n]$. Let $\mathcal{CP}_{n}=\{\alpha\in \mathcal{P}_{n}: (for ~all~x,y\in \dom~\alpha)~|x\alpha-y\alpha|\leq|x-y|\}$ be the subsemigroup of partial contraction mappings on $[n]$.  We  have shown that the semigroup $\mathcal{CP}_{n}$  and some of its subsemigroups are nonregular left abundant semigroups for all $n$ but not right abundant for $n\geq 4$.
 \end{abstract}
\emph{2010 Mathematics Subject Classification. 20M20.}

\section{Introduction and Preliminaries}
 Let $[n]=\{1,2, \ldots ,n\}$ be a finite chain, a map $\alpha$ which has domain and image both subsets of $[n]$ is said to be a \emph{transformation}.  A transformation $\alpha$ whose domain is a subset of $[n]$  (i. e., $\dom~\alpha \subseteq [n]$) is said to be  \emph{partial}. The collection of all partial transformations of $[n]$ is known as semigroup of partial transformations, usually denoted by $\mathcal{P}_{n}$. A map $\alpha\in \mathcal{P}_{n}$ is said to be \emph{order preserving} (resp., \emph{order reversing}) if  (for all $x,y \in \dom~\alpha$) $x\leq y$ implies $x\alpha\leq y\alpha$ (resp. $x\alpha\geq y\alpha$); is \emph{order decreasing} if (for all $x\in \dom~\alpha$) $x\alpha\leq x$;  an \emph{isometry} (i. e., \emph{ distance preserving}) if (for all $x,y \in \dom~\alpha$) $|x\alpha-y\alpha|=|x-y|$;   a \emph{contraction} if (for all $x,y \in \dom~\alpha$) $|x\alpha-y\alpha|\leq |x-y|$. Let $\mathcal{CP}_{n}=\{\alpha\in \mathcal{P}_{n}:(for ~all~x,y\in \dom~\alpha)~ |x\alpha-y\alpha|\leq |x-y|\}$ and $\mathcal{OCP}_{n}=\{\alpha\in \mathcal{CP}_{n}: (for ~all~x,y\in \dom~\alpha)~x\leq y ~ implies ~ x\alpha\leq y\alpha\}$ be the subsemigroups of \emph{partial contractions} and of \emph{order preserving partial contractions} of $[n]$, respectively.

  Further, the collection of all order preserving or order reversing partial contractions denoted by $\mathcal{ORCP}_{n}$ is a subsemigroup of $\mathcal{ORP}_{n}$ (where $\mathcal{{ORP}}_{n}$ denotes the semigroup of order preserving or order reversing partial transformations of $[n]$).

     In 2013, Zhao and Yang \cite{py} characterized the Green's relations on the subsemigroup $\mathcal{OCP}_{n}$ of $\mathcal{CP}_{n}$ ($\mathcal{OCP}_{n}$ is the semigroup of order preserving partial contractions on $[n]$). Recently, Ali \emph{et al.} \cite{mmz} obtained a necessary and sufficient condition for an element in $\mathcal{CP}_{n}$ to be regular and also described all its Green's equivalences. Most of the results concerning regularity and Green's relations for some subsemigroups of $\mathcal{CP}_{n}$ can be deduced from the results obtained in that paper.
   Zhao and Yang \cite{py} have shown that the semigroup $\mathcal{OCP}_{n}$ ($n>2$) is nonregular.  Similarly, Ali \emph{et al.} \cite{mmz} have shown that the semigroups $\mathcal{CP}_{n}$ and  $\mathcal{ORCP}_{n}$ are nonregular for $n>2$. Thus, there is a need to identify the class of semigroups to which they belong, for example, whether they are \emph{abundant} semigroups. Therefore, this paper is a natural sequel to Ali \emph{et al.} \cite{mmz}.

 This section includes a brief introduction giving some basic definitions and introducing some new concepts. In section 2, we characterize all the  starred Green's relations on the semigroups $\mathcal{CP}_{n}$, $\mathcal{ORCP}_{n}$ and $\mathcal{OCP}_{n}$ and show that  $\mathcal{D}^{*}=\mathcal{J}^{*}$ \cite{FOUN2}. In section 3, we show that the collection of all \emph{strongly regular} elements of $\mathcal{ORCP}_{n}$ is a subsemigroup and  in section 4, we show that the semigroups $\mathcal{CP}_{n}$, $\mathcal{ORCP}_{n}$ and $\mathcal{OCP}_{n}$ are left abundant for all $n$ but not right abundant for $n\geq 4$.

 For standard concepts in semigroup theory, we refer the reader to Howie \cite{Howie1} and Higgins \cite{ph}.

 Let $\alpha$ be element of $\mathcal{CP}_{n}$. Let  $\dom\ \alpha$, $\im~\alpha$,  $h~(\alpha)$ and $F(\alpha)$ denote, the \emph{domain of} $\alpha$, \emph{image of} $\alpha$, $|\im~\alpha|$  and $\{x\in\dom ~\alpha: x\alpha=x\}$ (i. e., the \emph{set of fixed points of} $\alpha$), respectively. For $\alpha,\beta \in \mathcal{CP}_{n}$,  the composition of $\alpha$ and $\beta$ is defined as $x(\alpha \circ \beta) =((x)\alpha)\beta$ for any $x$ in $\dom~\alpha$.  Without ambiguity, we shall be using the notation $\alpha\beta$ to denote $\alpha \circ\beta$.

   Next, given any transformation $\alpha$ in $\mathcal{ORCP}_{n}$, the domain of $\alpha$  is partitioned into $p-blocks$ by the relation $\ker~\alpha=\{(x,y)\in [n]\times [n]: x\alpha=y\alpha\}$ and so as in \cite{HRS}, $\alpha$ can be expressed as

    \begin{equation}\label{1}
    \alpha=\left( \begin{array}{cccc}
                           A_{1} & A_{2} & \ldots & A_{p} \\
                           x_{1} & x_{2} & \ldots & x_{p}
                         \end{array}
   \right)\in \mathcal{P}_{n}  ~~  (1\leq p\leq n),
    \end{equation}

    \noindent where, $A_{i}$ ($1\leq i\leq p$) are equivalence classes under the relation $\ker~\alpha$, i. e.,  $A_{i}=x_{i}\alpha^{-1}$  ($1\leq i\leq p$) and further $\textbf{Ker}~\alpha$ is ordered under the usual ordering, i. e.,  $\textbf{Ker}~\alpha=\{A_{1}< A_{2}< \ldots < A_{p}\}$. Thus for the rest of the content of the paper we shall consider $\alpha$ to be as expressed in \eqref{1} unless otherwise specified.

      Now, let $\textbf{Ker}~\alpha=\left(A_{i}\right)_{i\in[p]}=\{A_{1}, A_{2},\ldots, A_{p}\}$ be the partition of $\dom~\alpha$. A subset $T_{\alpha}$ of $[n]$ is said to be a \emph{transversal} of the partition $\textbf{Ker}~\alpha$ if $|A_{i}\cap T_{\alpha}|=1$ ($1\leq i\leq p$). A transversal  $T_{\alpha}$  is said to be \emph{relatively convex} if for all $x,y\in T_{\alpha}$ with $x\leq y$ and if $x\leq z\leq y$ ($z\in \dom~\alpha$), then $z\in T_{\alpha}$. Notice that every convex transversal is necessarily relatively convex but not \emph{vice-versa}. A transversal $T_{\alpha}$ is said to be \emph{admissible}  if and only if the map $A_{i}\mapsto t_{i}$  ($t_{i}\in T_{\alpha},\, i\in\{1,2,\ldots,p\}$) is a contraction, see \cite{mmz}. Notice that every convex transversal is admissible but not \emph{vice-versa}.

Let $S$ be a semigroup and $A$ be a subset of $S$, $\langle A\rangle$ denotes the semigroup generated by $A$ and is a subsemigroup of $S$. If $\langle A\rangle=S$ then $A$ is said to generate $S$ and also  $\langle A\rangle=A$ if and only if  $A$ is a subsemigroup of $S$. An element $a$ in a semigroup $S$ is said to be an idempotent if and only if $a^{2}=a$. As usual $E(S)$ denotes the set of all idempotents in $S$. It is well known that an element $\alpha$ in $\mathcal{P}_{n}$ is idempotent if and only if $\im~\alpha=F(\alpha)$. (Equivalently, $\alpha$ is idempotent if and only if $x_{i}\in A_{i}$ for  $1\leq i\leq p$, that is to say the blocks in $\textbf{Ker}~\alpha$ are \emph{stationary}.)

 Next, we quote some basic lemmas  from \cite{mmz} which would be useful for some of the subsequent results.

\begin{lemma}[\cite{mmz}, Lemma 1.3]\label{p1} For $n\geq 4$, let $\alpha\in \mathcal{CP}_{n}$   such that there exists $k\in\{2,\ldots,p-1\}$ ($3\leq p\leq n $)  with $|A_{k}|\geq 2$.  If $A_{i}<A_{j}$ ($i<j$) for all $i,j \in  \{1,2,\ldots,p\}$  then the partition $\textbf{Ker}~\alpha=\{ A_{1}, A_{2},\ldots, A_{p}\}$ of $\dom~\alpha$ has no relatively convex transversal.
 \end{lemma}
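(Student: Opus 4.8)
The plan is to argue by contradiction. Suppose $T_\alpha$ is a relatively convex transversal of $\textbf{Ker}~\alpha = \{A_1 < A_2 < \cdots < A_p\}$, and let $t_i$ be the unique element of $A_i \cap T_\alpha$ for each $i$. Since the blocks are ordered $A_1 < A_2 < \cdots < A_p$ (every element of $A_i$ is less than every element of $A_j$ when $i<j$), the transversal elements satisfy $t_1 < t_2 < \cdots < t_p$. The key local fact I would establish first is that relative convexity of $T_\alpha$, together with the block ordering, forces the transversal elements to be \emph{consecutive within $\dom~\alpha$}: concretely, for each $i$, if $x \in \dom~\alpha$ with $t_i < x < t_{i+1}$, then relative convexity gives $x \in T_\alpha$; but $x$ lies in some block $A_j$, and since $t_i < x < t_{i+1}$ and the blocks are ordered, we must have $j = i$ or $j = i+1$, either of which already contains a transversal element ($t_i$ or $t_{i+1}$), contradicting $|A_j \cap T_\alpha| = 1$. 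Hence there is no element of $\dom~\alpha$ strictly between $t_i$ and $t_{i+1}$.

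\medskip
\noindent
\textbf{The main step} is then a counting/spacing argument at the block $A_k$ with $|A_k| \geq 2$ and $2 \leq k \leq p-1$. Write $A_k = \{a_1 < a_2 < \cdots < a_m\}$ with $m \geq 2$; the transversal element $t_k$ is one of these $a_\ell$. Because $A_{k-1} < A_k < A_{k+1}$ and the blocks are genuine $\ker~\alpha$-classes partitioning $\dom~\alpha$, the elements $a_1, \ldots, a_m$ all lie in $\dom~\alpha$ and all lie strictly between $t_{k-1}$ and $t_{k+1}$ (indeed strictly below $\min A_{k+1} \le t_{k+1}$ and strictly above $\max A_{k-1} \ge t_{k-1}$). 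Since $|A_k| = m \geq 2$, at least one $a_\ell \neq t_k$; pick such an $a_\ell$. If $a_\ell < t_k$ then $t_{k-1} < a_\ell < t_k$ with $a_\ell \in \dom~\alpha$, contradicting the "no element strictly between consecutive transversal points" fact from the previous paragraph; if $a_\ell > t_k$ then $t_k < a_\ell < t_{k+1}$ with $a_\ell \in \dom~\alpha$, again a contradiction. (Here I use $2 \le k \le p-1$ precisely so that both neighbours $t_{k-1}$ and $t_{k+1}$ exist.) This contradiction shows no relatively convex transversal can exist.

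\medskip
\noindent
\textbf{The part requiring the most care} is the clean statement and justification that, under the hypothesis $A_i < A_j$ for $i<j$, every element of $\dom~\alpha$ lying strictly between $t_i$ and $t_{i+1}$ must belong to $A_i$ or $A_{i+1}$ — this is where the total ordering of the blocks (as opposed to merely $\textbf{Ker}~\alpha$ being an ordered partition in the weaker sense used elsewhere in the paper) is essential, and it is the hinge on which the whole argument turns. I would spell this out carefully: if $x \in A_j$ with $j \le i-1$ then $x < t_i$ is false is not quite it — rather $x \in A_j$, $A_j \le A_{i-1} < A_i$ forces $x < t_i$, so $x$ is not strictly between; and symmetrically $j \ge i+2$ forces $x > t_{i+1}$. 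Once this is in place, the rest is the short contradiction above. Note the hypothesis that $\alpha$ itself is a contraction is not actually needed for this lemma — only the combinatorial structure of $\textbf{Ker}~\alpha$ and the block ordering — so I would not invoke the contraction condition at all, and I would remark that the bound $n \geq 4$ is only there to make room for an interior block of size $\ge 2$, i.e. it is implied by the existence of $k \in \{2,\dots,p-1\}$ with $3 \le p \le n$.
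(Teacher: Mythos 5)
The paper does not actually prove this lemma: it is quoted verbatim from the reference \cite{mmz} (Ali \emph{et al.}), so there is no in-paper argument to compare yours against. Judged on its own, your proof is correct and complete. The two observations that do the work --- (a) relative convexity plus the total ordering $A_1<A_2<\cdots<A_p$ forces $\dom~\alpha$ to contain no point strictly between consecutive transversal elements $t_i<t_{i+1}$, and (b) every element of the interior block $A_k$ lies strictly between $t_{k-1}$ and $t_{k+1}$, so a second element of $A_k$ besides $t_k$ would be forced into $T_\alpha$ --- fit together without gaps, and you correctly identify that $2\le k\le p-1$ is needed precisely so that both neighbours $t_{k-1}$ and $t_{k+1}$ exist (for $k=1$ or $k=p$ the statement genuinely fails, e.g.\ $A_1=\{1,2\}$, $A_2=\{3\}$ with $T=\{2,3\}$). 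Your side remarks are also accurate: the contraction hypothesis plays no role, and $n\ge 4$ is implied by $p\ge 3$ together with an interior block of size at least $2$. The only blemish is the garbled self-correcting sentence in your final paragraph ("$x<t_i$ is false is not quite it"), which should be cleaned up to the statement you in fact intend: $x\in A_j$ with $j\le i-1$ forces $x<t_i$, and $j\ge i+2$ forces $x>t_{i+1}$.
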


 \begin{corollary}[\cite{mmz}, Lemma 1.4]\label{p2} For $n\geq 4$, let $\alpha\in \mathcal{ORCP}_{n}$   such that there exists $k\in\{2,\ldots,p-1\}$ ($p\geq 3$) with $|A_{k}|\geq 2$. Then the partition $\textbf{Ker}~\alpha=\{ A_{1}, A_{2},\ldots, A_{p}\}$ of $\dom~\alpha$ has no relatively convex transversal.
\end{corollary}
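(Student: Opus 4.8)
The plan is to reduce the statement to Lemma~\ref{p1}. The key observation is that Lemma~\ref{p1} does not in fact use order-preservation: it only uses that $\alpha\in\mathcal{CP}_{n}$, that some middle block has size $\geq 2$, and the purely combinatorial hypothesis that the kernel classes are labelled so that $A_{i}<A_{j}$ whenever $i<j$. So the whole content of the corollary is the (easy) fact that every member of $\mathcal{ORCP}_{n}$ automatically has linearly ordered kernel classes. Thus I would proceed by verifying this and then quoting Lemma~\ref{p1}.

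First I would show each kernel class of $\alpha$ is a convex subset of the chain $\dom~\alpha$. Split into two cases according to whether $\alpha$ is order preserving or order reversing. Take a class $A_{i}$ with $a,b\in A_{i}$, $a<b$, and $c\in\dom~\alpha$ with $a\leq c\leq b$. If $\alpha$ is order preserving then $a\alpha\leq c\alpha\leq b\alpha$, and since $a\alpha=b\alpha$ this forces $c\alpha=a\alpha$, i.e. $c\in A_{i}$. If $\alpha$ is order reversing the same computation with reversed inequalities, $a\alpha\geq c\alpha\geq b\alpha$, again gives $c\alpha=a\alpha$, so $c\in A_{i}$. Hence in both cases the classes $A_{1},\ldots,A_{p}$ are pairwise disjoint convex subsets of $\dom~\alpha$.

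Next I would invoke the elementary fact that disjoint convex subsets of a totally ordered set are linearly ordered by $C<D\iff (\forall c\in C)(\forall d\in D)\ c<d$: were this to fail for two classes there would be $c,c'\in A_{i}$ and $d,d'\in A_{j}$ with $c<d$ and $d'<c'$, and a short case check on whether $d'<c$ or $c\leq d'$ produces an element of one class lying (weakly) between two elements of the other class, contradicting disjointness together with convexity. So the $p$ kernel classes of $\alpha$ are totally ordered, and relabelling them as $\textbf{Ker}~\alpha=\{A_{1}<A_{2}<\cdots<A_{p}\}$ is exactly the normal form~\eqref{1}. Relabelling only permutes indices, so the assumption that some block of index in $\{2,\ldots,p-1\}$ has size $\geq 2$ is preserved (in the order-reversing case the index $k$ merely becomes $p+1-k$, still strictly between $1$ and $p$). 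Now $\alpha\in\mathcal{CP}_{n}$ with $n\geq 4$, $3\leq p\leq n$, a middle block of size $\geq 2$, and $A_{i}<A_{j}$ for all $i<j$; Lemma~\ref{p1} applies and yields that $\textbf{Ker}~\alpha$ has no relatively convex transversal.

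The only mildly delicate point is recognising that Lemma~\ref{p1} is really a statement about the combinatorial pattern of the partition rather than about order-preservation, so that nothing beyond the convexity argument above is needed. Alternatively, the order-reversing case can be dispatched by composing on the left with the reflection $\theta\colon x\mapsto n+1-x$, which is an order-reversing isometry of $[n]$: then $\theta\alpha\in\mathcal{OCP}_{n}$, its kernel classes are $\theta(A_{1}),\ldots,\theta(A_{p})$ (same sizes, reversed order), and the map $T\mapsto\theta(T)$ is a bijection between transversals of $\textbf{Ker}~\alpha$ and of $\textbf{Ker}~(\theta\alpha)$ that preserves relative convexity; applying the order-preserving case to $\theta\alpha$ then finishes the proof.
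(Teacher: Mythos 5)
Your argument is correct. The paper itself gives no proof of this statement (it is quoted from \cite{mmz}), but it is presented as a corollary of Lemma~\ref{p1}, and your reduction --- observing that order preservation or reversal forces each kernel class to be convex, hence that the disjoint classes are linearly ordered in the strong sense $A_i<A_j$ required by Lemma~\ref{p1}, with the middle-block hypothesis surviving the (possible) relabelling --- is exactly the intended derivation.
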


\begin{lemma}[\cite{mmz}, Lemma 1.5]\label{p3}
 Let $\alpha\in \mathcal{CP}_{n}$   such that $A_{i}<A_{j}$ for all $i<j$ in $\{1,2,\ldots,p\}$  ($p\geq 3$).  If  $|A_{i}|=1$  for all $2\leq i\leq p-1$ then the partition $\textbf{Ker}~\alpha$ of $\dom~\alpha$ has an \emph{admissible}  transversal $T_{\alpha}$.
  \end{lemma}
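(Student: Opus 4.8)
The plan is to unwind the definition of admissibility and then write down the required transversal explicitly. Recall that a transversal $T_{\alpha}=\{t_{1},t_{2},\ldots,t_{p}\}$ with $t_{i}\in A_{i}$ is admissible precisely when the map $\dom\ \alpha\to[n]$ sending every element of $A_{i}$ to $t_{i}$ is a contraction, i.e. lies in $\mathcal{CP}_{n}$. Since by hypothesis $A_{i}<A_{j}$ whenever $i<j$ (so that automatically $t_{1}<t_{2}<\cdots<t_{p}$, and among $x\in A_{i}$, $y\in A_{j}$ the closest pair is $\max A_{i}$, $\min A_{j}$), this contraction condition is exactly the family of inequalities $t_{j}-t_{i}\le \min A_{j}-\max A_{i}$ for all $1\le i<j\le p$. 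So the whole statement reduces to producing a choice of representatives $t_{i}\in A_{i}$ making all these inequalities hold.

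The first step is to observe that it suffices to verify these inequalities for consecutive indices $j=i+1$. Indeed, for arbitrary $i<j$, summing $t_{k+1}-t_{k}\le \min A_{k+1}-\max A_{k}$ over $k=i,i+1,\ldots,j-1$ telescopes to
\[
t_{j}-t_{i}\ \le\ \min A_{j}-\max A_{i}+\sum_{k=i+1}^{j-1}\bigl(\min A_{k}-\max A_{k}\bigr)\ \le\ \min A_{j}-\max A_{i},
\]
the last step because $\min A_{k}\le\max A_{k}$ for every intermediate block (and the sum is empty, hence $0$, when $j=i+1$).

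Now I would define the transversal. For $2\le i\le p-1$ the block $A_{i}$ is a singleton, so let $t_{i}$ be its unique element; and put $t_{1}=\max A_{1}$ and $t_{p}=\min A_{p}$. Then $T_{\alpha}=\{t_{1},\ldots,t_{p}\}$ is a transversal, and I claim each consecutive inequality holds, in fact with equality: if $2\le i\le p-2$ then both $A_{i}$ and $A_{i+1}$ are singletons, so $\min A_{i+1}-\max A_{i}=t_{i+1}-t_{i}$; for $i=1$ (here $p\ge3$, so $A_{2}$ is a singleton) one has $\max A_{1}=t_{1}$ and $\min A_{2}=t_{2}$, whence $\min A_{2}-\max A_{1}=t_{2}-t_{1}$; and for $i=p-1$ similarly $\max A_{p-1}=t_{p-1}$ and $\min A_{p}=t_{p}$, so $\min A_{p}-\max A_{p-1}=t_{p}-t_{p-1}$. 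Hence all consecutive inequalities hold, and by the first step all of $t_{j}-t_{i}\le\min A_{j}-\max A_{i}$ hold; therefore $T_{\alpha}$ is admissible.

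There is no real obstacle here; the only points needing a little care are recognising the admissibility condition as the cross-block contraction inequalities and noticing the telescoping reduction to consecutive blocks. After that, the hypothesis that the interior blocks are singletons forces the interior representatives and pins down the correct choices $t_{1}=\max A_{1}$, $t_{p}=\min A_{p}$, completing the argument. (As a remark one may note that the admissible transversal produced is in fact unique, and that the associated constant-on-blocks map is an idempotent of $\mathcal{CP}_{n}$.)
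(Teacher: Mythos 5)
Your argument is correct and complete: reducing admissibility to the cross-block inequalities $t_{j}-t_{i}\le \min A_{j}-\max A_{i}$, telescoping these down to consecutive blocks, and then checking that the forced choices $t_{1}=\max A_{1}$, $t_{p}=\min A_{p}$ and the singleton interior representatives achieve equality in each consecutive inequality is exactly the right way to do this. Note that the paper itself states this result only as a quotation of Lemma~1.5 of \cite{mmz} and gives no proof, so there is nothing to compare against; your write-up stands as a valid self-contained proof of the cited fact.
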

\begin{lemma}[\cite{mmz} Lemma 1.8]\label{tofa} Let $\alpha\in \mathcal{CP}_{n}$ and let $A$ be a convex subset of $\dom~\alpha$. Then $A\alpha$ is convex.
 \end{lemma}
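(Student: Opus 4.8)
The plan is to reduce the statement to a discrete intermediate‑value argument, invoking the contraction hypothesis only on pairs of consecutive integers. Since the empty set and singletons are trivially convex, we may write $A=\{a,a+1,\ldots,b\}$ with $a<b$. The first step is the observation that consecutive points of $A$ cannot be spread out by $\alpha$: for $a\le i<b$ we have $i,i+1\in A\subseteq\dom\alpha$, so the defining inequality of $\mathcal{CP}_n$ gives $|i\alpha-(i+1)\alpha|\le|i-(i+1)|=1$. Thus $i\mapsto i\alpha$ traces out a ``walk'' on $[n]$ whose successive values differ by at most $1$.

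The second step turns this into convexity of the image. Set $m=\min A\alpha$ and $M=\max A\alpha$, and pick $i_0,i_1\in A$ with $i_0\alpha=m$ and $i_1\alpha=M$; by reversing the orientation of the chain if necessary we may assume $i_0\le i_1$. Clearly $A\alpha\subseteq\{m,m+1,\ldots,M\}$, so it suffices to show that every integer $t$ with $m\le t\le M$ lies in $A\alpha$. Let $j$ be the least index in $\{i_0,i_0+1,\ldots,i_1\}$ with $j\alpha\ge t$, which exists since $i_1\alpha=M\ge t$. If $j=i_0$, then $t\le i_0\alpha=m$ forces $t=m\in A\alpha$. If $j>i_0$, then $j-1$ still lies in $\{i_0,\ldots,i_1\}$, so minimality of $j$ gives $(j-1)\alpha\le t-1$; combining with the step bound $j\alpha\le(j-1)\alpha+1\le t$ from Step~1 and with $j\alpha\ge t$ yields $j\alpha=t$, so $t\in A\alpha$. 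Hence $A\alpha=\{m,m+1,\ldots,M\}$, which is convex.

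The only real difficulty is the bookkeeping in Step~2: one must set up the ``least index'' so as to cover the degenerate case $j=i_0$ and the orientation‑reversed case $i_1<i_0$ (where ``least'' over $\{i_0,\ldots,i_1\}$ is replaced by ``greatest'' over $\{i_1,\ldots,i_0\}$), and one must check that the consecutive indices $j-1,j$ both lie in $A$ so that Step~1 applies to them. No structural facts about $\mathcal{CP}_n$ beyond the contraction inequality are needed; in particular $\alpha|_A$ need not be order preserving, so a direct ``monotone image of an interval'' argument is unavailable and the walk argument is essentially forced. (Alternatively Step~2 can be done by induction on $b-a$, deleting the endpoint $b$: the image loses at most the single value $b\alpha$, which by Step~1 is within distance $1$ of $(b-1)\alpha\in(A\setminus\{b\})\alpha$, so a convex image remains convex; this is equivalent but slightly more delicate at the endpoints.)
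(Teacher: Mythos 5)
Your argument is correct: the contraction condition applied to consecutive elements of the convex set $A$ shows that $i\mapsto i\alpha$ moves in steps of size at most $1$, and your discrete intermediate--value argument (with the careful treatment of the degenerate case $j=i_0$ and of the reversed orientation $i_1<i_0$) then shows $A\alpha=\{\min A\alpha,\ldots,\max A\alpha\}$. Note that the paper itself gives no proof of this lemma --- it is quoted verbatim from the reference \cite{mmz} --- so there is nothing to compare against here; your write-up is a complete, self-contained proof of the standard kind one would expect for this fact.
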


\begin{lemma}[\cite{mmz}, Corollary 4.2]\label{co2} Let $\alpha\in \mathcal{ORCP}_{n}$. If $|\im~\alpha|\geq 3$, then $\alpha$ is regular if and only if either $\min A_{p}-x_{p}=\max A_{1}-x_{1}=d$ and $A_{i}=\{x_{i}+d\}$ or  $\min A_{p}-x_{1}=\max A_{1}-x_{p}=d$ and $A_{i}=\{x_{p-i+1}+d\}$, for $i=2,\ldots,p-1$. Equivalently, if $T_{\alpha}$ is admissible then $\alpha$ is regular if and only if $\alpha|_{T_{\alpha}}$ is an isometry.
\end{lemma}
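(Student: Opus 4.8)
The plan is to recast regularity of $\alpha$ as a combinatorial condition on $\textbf{Ker}~\alpha$ and the image points $x_{1},\dots ,x_{p}$, extract the block description from it, and obtain the order reversing case from the order preserving one via the reflection $i\mapsto n+1-i$. First I would record the reduction: $\alpha$ is regular in $\mathcal{ORCP}_{n}$ iff $\alpha\beta\alpha=\alpha$ for some $\beta\in\mathcal{ORCP}_{n}$, and evaluating $\alpha\beta\alpha$ on each $A_{i}$ forces $x_{i}\beta\in A_{i}$; restricting $\beta$ to $\im~\alpha$ (a restriction of an element of $\mathcal{ORCP}_{n}$ is again in $\mathcal{ORCP}_{n}$) gives: \emph{$\alpha$ is regular if and only if there is $\gamma\in\mathcal{ORCP}_{n}$ with $\dom~\gamma=\im~\alpha$ whose image $T_{\alpha}=\{t_{1},\dots ,t_{p}\}$, $t_{i}\in A_{i}$, is a transversal of $\textbf{Ker}~\alpha$} --- equivalently $\gamma=(\alpha|_{T_{\alpha}})^{-1}$. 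Since the kernel classes of an order preserving (resp. order reversing) map are convex, $A_{1}<\dots <A_{p}$ forces $t_{1}<\dots <t_{p}$ for every such transversal, so $\gamma$ is order preserving when $\alpha$ is and order reversing when $\alpha$ is; the only remaining freedom is the choice of $t_{i}\in A_{i}$, constrained by requiring $\gamma$ to be a contraction.

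Next I would treat the order preserving case: $x_{1}<\dots <x_{p}$, $\gamma\colon x_{i}\mapsto t_{i}$ is order preserving, so by telescoping $\gamma$ is a contraction iff $t_{i+1}-t_{i}\le x_{i+1}-x_{i}$ for all $i$, i.e. iff $c_{i}:=t_{i}-x_{i}$ is non-increasing. Writing $L_{i}=\min A_{i}-x_{i}$ and $R_{i}=\max A_{i}-x_{i}$, a short selection argument shows a non-increasing $c_{i}\in[L_{i},R_{i}]$ exists iff $L_{j}\le R_{i}$ for all $i<j$; but $\alpha$ itself is a contraction, and applying this to $\max A_{i}\in A_{i}$, $\min A_{j}\in A_{j}$ yields $R_{i}\le L_{j}$ for all $i<j$. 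Hence $\alpha$ is regular iff $L_{j}=R_{i}$ for all $i<j$. Here is where $p=|\im~\alpha|\ge 3$ is used: these equalities force $L_{k}=R_{k}$, i.e. $|A_{k}|=1$, for each interior index $2\le k\le p-1$, and then, with $d:=\max A_{1}-x_{1}$, that $A_{k}=\{x_{k}+d\}$ for $2\le k\le p-1$, $\max A_{1}=x_{1}+d$ and $\min A_{p}=x_{p}+d$ --- the first displayed condition. Conversely, under these conditions $T_{\alpha}=\{x_{1}+d,\dots ,x_{p}+d\}$ is a transversal and $\gamma\colon x_{i}\mapsto x_{i}+d$ is a translation, an isometry lying in $\mathcal{ORCP}_{n}$, so $\alpha$ is regular and $\alpha|_{T_{\alpha}}$ is an isometry. (Once the interior blocks are known to be singletons one may instead quote Lemma~\ref{p3} for the existence of an admissible transversal; Lemma~\ref{p1} and Corollary~\ref{p2} give the complementary obstruction coming from a larger interior block.)

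For an order reversing $\alpha$ I would multiply on the right by the reflection $\rho\colon i\mapsto n+1-i$, a unit of $\mathcal{ORCP}_{n}$: $\alpha\rho$ is order preserving, $\textbf{Ker}(\alpha\rho)=\textbf{Ker}~\alpha$, its image points are $n+1-x_{i}$ (now increasing in $i$), and $\alpha$ is regular iff $\alpha\rho$ is; running the previous step for $\alpha\rho$ and translating back through $\rho$ gives the second displayed condition, the reflected analogue of the first. Finally the ``equivalently'' reformulation: if $T_{\alpha}$ is admissible and $\alpha|_{T_{\alpha}}$ is an isometry then $\gamma=(\alpha|_{T_{\alpha}})^{-1}$ is a monotone isometry, hence lies in $\mathcal{ORCP}_{n}$ and witnesses regularity; conversely, if $\alpha$ is regular then each interior block is a singleton, so for any admissible transversal the interior $t_{k}$ are forced, and applying admissibility to the adjacent pairs $(A_{1},A_{2})$ and $(A_{p-1},A_{p})$ --- whose gaps are $|x_{2}-x_{1}|$ and $|x_{p}-x_{p-1}|$ by the block structure --- pins down $t_{1}$ and $t_{p}$ as well, so $\alpha|_{T_{\alpha}}$ is a translation, hence an isometry.

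The main obstacle is the bookkeeping in the second paragraph: deducing the precise block description from the family $L_{j}=R_{i}$ ($i<j$) and handling interior versus boundary blocks correctly, while verifying that $p\ge 3$ is exactly the hypothesis that makes this work (for $p\le 2$ the boundary blocks need not be singletons and the characterisation genuinely differs). One must also be careful in the third paragraph with the indexing of the reflected condition and with reading ``contraction'' of the block map $A_{i}\mapsto t_{i}$ as contractivity for the gap distance $\min A_{j}-\max A_{i}$; everything else --- the selection lemma for the $c_{i}$, the telescoping reduction of contractivity, and the identity $\textbf{Ker}(\alpha\rho)=\textbf{Ker}~\alpha$ --- is routine.
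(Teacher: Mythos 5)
First, note that the paper offers no proof of this lemma: it is imported verbatim from \cite{mmz} (Corollary 4.2), so there is no internal argument to compare yours against; what follows assesses your proof on its own terms. Your reduction of regularity to the existence of a transversal $t_i\in A_i$ for which $x_i\mapsto t_i$ is an (automatically monotone) contraction is correct, as is the whole order preserving analysis: the telescoping reduction, the selection criterion ($L_j\le R_i$ for all $i<j$), the opposite inequality $R_i\le L_j$ coming from $\alpha$ itself being a contraction, and the deduction that interior blocks are singletons once $p\ge 3$ (via $L_k=R_{k-1}=L_{k+1}=R_k$). This yields exactly the first displayed alternative and the isometry reformulation.

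The gap is in the last step of the order reversing case. Running your own argument (directly, or through the reflection $\rho$) gives: an order reversing $\alpha$ is regular iff $\max A_i+x_i=\min A_j+x_j$ for all $i<j$, i.e.\ iff $\min A_p+x_p=\max A_1+x_1=:c$ and $A_i=\{c-x_i\}$ for $2\le i\le p-1$. This is \emph{not} the second displayed condition: one has $c-x_i=x_{p-i+1}+d$ (with $d=\max A_1-x_p$) precisely when $x_i+x_{p-i+1}=x_1+x_p$, i.e.\ when the image set is symmetric, which regularity does not force. Concretely, in $\mathcal{ORCP}_{8}$ the order reversing isometry $\alpha$ with $5\mapsto 5$, $6\mapsto 4$, $8\mapsto 2$ is regular (its inverse isometry is a witness) and satisfies the ``equivalently'' clause, but satisfies neither displayed alternative: here $\min A_3-x_1=\max A_1-x_3=3=d$, yet $A_2=\{6\}\neq\{x_2+3\}=\{7\}$. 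So your assertion that the reflection ``gives the second displayed condition'' fails as stated; what your method actually proves is the (correct) isometry formulation, and the block-level display in the quoted lemma would need to be corrected to $A_i=\{\max A_1+x_1-x_i\}$ in the order reversing branch before your proof closes. You should either prove that corrected version explicitly or flag the discrepancy rather than claim the literal statement.
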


\section{Starred Green's relations}

 Let $S$ be a semigroup. A relation $\mathcal{L}^{*}$ defined as ($\forall ~a, ~b\in S$) $a \mathcal{L}^{*} b$ \emph{if and only if} $a$, $b$ are related by the Green's $\mathcal{L}^{*}$ relation in some oversemigroup of $S$, is known as the starred Green's $\mathcal{L}$ relation. The relation $\mathcal{R}^{*}$ is defined dually, while the relation $\mathcal{D}^{*}$ is defined as the join of the relations $\mathcal{L}^{*}$ and $\mathcal{R}^{*}$. The intersection of $\mathcal{L}^{*}$ and $\mathcal{R}^{*}$ is denoted by $\mathcal{H}^{*}$. A semigroup $S$ is said to be \emph{left abundant} (resp., \emph{right abundant}) if each $\mathcal{L}^{*}-$class  (resp., $\mathcal{R}^{*}-$class) contains an idempotent, it is called \emph{abundant} if each $\mathcal{L}^{*}-$class and $\mathcal{R}^{*}-$class of $S$ contains an idempotent. An abundant semigroup in which the set  $E(S)$, of  idempotents of $S$ is a subsemigroup of $S$ is called \emph{ quasi adequate} and if $E(S)$ is commutative then $S$ is called \emph{adequate} \cite{FOUN, FOUN2}.

 Many nonregular classes of transformation semigroups were shown to be either abundant or adequate, for example see \cite{ua, uaa,uaaa,uaaaa,PZ, SL,ZP,MS}. Recently, AlKharousi \emph{et al.} have shown that the semigroup $\mathcal{OCI}_{n}$, of all order preserving one to one contraction maps of a finite chain is adequate \cite{garbac2}. In this section we are going to show that the semigroups $\mathcal{CP}_{n}$, $\mathcal{OCP}_{n}$, $\mathcal{CT}_{n}$ and $\mathcal{OCT}_{n}$ are all left abundant (for all $n$) but not right abundant for $n\geq 4$.

We shall use the following notation from (\cite{Howie1}, Chapter 2). If $U$ is a subsemigroup of a semigroup $S$ then $a \mathcal{L}^{U}b$ means that there exist $u,v\in U^{1}$ such that $ua=b$ and $vb=a$, while    $a \mathcal{L}^{S}b$ means that there exist $x,y\in S^{1}$ such that $xa=b$ and $yb=a$. Similarly, for the relation $\mathcal{R}$.

  Some of the earlier results concerning starred Green's relations on a transformation semigroup were obtained by Umar \cite{ua, uaa, uaaa, uaaaa}, where he described all the starred relations on the semigroups of order decreasing full and of order decreasing partial one-one transformations of a chain,  these papers marked the beginning of the study of these relations on a transformation semigroup. Recently, Garba \emph{et al.} characterized these relations on the semigroup of full contraction maps and of order preserving full contraction maps of a finite chain: $\mathcal{CT}_{n}$ and $\mathcal{OCT}_{n}$, respectively \cite{garbac}. In this section, we characterize these relations on the more general semigroup of partial contractions $\mathcal{CP}_{n}$ and its subsemigroups of order preserving or order reversing partial contraction maps of a finite chain $\mathcal{ORCP}_{n}$, and of order preserving  partial contraction maps of a finite chain  $\mathcal{OCP}_{n}$, respectively. We equally show that the relations $\mathcal{D}^{*}$ and $\mathcal{J}^{*}$ coincide on these semigroups.

  To begin our investigation let us start with the following. The relations $\mathcal{L}^{*}$ and $\mathcal{R}^{*}$ have the following characterizations as described in (\cite{Howie1}, Exercise 2.6.7-9) or as described in \cite{FOUN}.
\begin{equation}\label{f1}
\mathcal{L}^{*}=\{(a,~b): (\forall x,y\in S^{1})~ax=ay\Leftrightarrow bx=by\}
\end{equation} and
\begin{equation}\label{f2}
\mathcal{R}^{*}=\{(a,~b): (\forall x,y\in S^{1})~xa=ya\Leftrightarrow xb=yb\}
\end{equation}

 We next give the characterizations of these relations on the  semigroups $\mathcal{CP}_{n}$, $\mathcal{ORCP}_{n}$ and $\mathcal{OCP}_{n}$ as follows: Let $S$ be a semigroup in $\{\mathcal{CP}_{n}, \mathcal{ORCP}_{n}, \mathcal{OCP}_{n}\}$.

 \begin{theorem}\label{starr} Let $\alpha, \beta\in S$. Then
 \begin{itemize}
   \item [(i)] $\alpha\mathcal{L}^{*}\beta$ if and only if $\im~\alpha=\im~\beta$.
   \item [(ii)] $\alpha\mathcal{R}^{*}\beta$ if and only if ${\ker}~\alpha={\ker}~\beta$.
   \item [(iii)] $\alpha\mathcal{H}^{*}\beta$ if and only if $\im~\alpha=\im~\beta$ and $\ker~\alpha=\ker~\beta$.
   \item [(iv)] $\alpha\mathcal{D}^{*}\beta$ if and only if $|\im~\alpha|=|\im~\beta|$.
 \end{itemize}
 \end{theorem}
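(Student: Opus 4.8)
The plan is to establish each part by a direct application of the abstract characterizations \eqref{f1} and \eqref{f2} of $\mathcal{L}^*$ and $\mathcal{R}^*$, exploiting the fact that these relations may be tested in \emph{any} oversemigroup — in particular in $\mathcal{P}_n$, where the ordinary Green's relations are already known (two partial transformations are $\mathcal{L}$-related iff they have the same image, and $\mathcal{R}$-related iff they have the same kernel). Since $\mathcal{CP}_n$, $\mathcal{ORCP}_n$ and $\mathcal{OCP}_n$ all sit inside $\mathcal{P}_n$, the relations $\mathcal{L}^{*}$ and $\mathcal{R}^{*}$ computed \emph{within} each of these semigroups $S$ coincide with $\mathcal{L}^*$ and $\mathcal{R}^*$ computed in the oversemigroup $\mathcal{P}_n$; but in $\mathcal{P}_n$ (being a regular, in fact finite, semigroup) $\mathcal{L}^*=\mathcal{L}$ and $\mathcal{R}^*=\mathcal{R}$. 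This immediately suggests the targets: $\alpha\mathcal{L}^*\beta \iff \im\,\alpha=\im\,\beta$ and $\alpha\mathcal{R}^*\beta\iff\ker\,\alpha=\ker\,\beta$. The subtlety, and the reason a proof is needed rather than a one-line citation, is that one must check that the \emph{same} relation is obtained whether one tests the condition in \eqref{f1}/\eqref{f2} using multipliers from $S^1$ or from $\mathcal{P}_n^1$; equivalently, one must produce, for each implication, witnesses that actually lie in $S$ (order-preserving, and contractions).

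For part (i): the forward direction is trivial, since $\alpha\,\mathrm{id}=\alpha\,\mathrm{id}$ forces, via \eqref{f1} with right multipliers, $\beta$ to have the same image-supported behaviour; more carefully, if $\im\,\alpha\neq\im\,\beta$ pick a point distinguishing them and build two constant-type maps $x,y\in S^1$ agreeing on $\im\,\alpha$ but not on $\im\,\beta$ (or vice versa) — constant maps and restrictions of the identity are always order-preserving contractions, so they lie in $S$. For the converse, if $\im\,\alpha=\im\,\beta$ then for any $x,y$, $\alpha x=\alpha y$ depends only on the common image, whence $\alpha x=\alpha y\iff \beta x=\beta y$; so $\alpha\mathcal{L}^*\beta$. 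Part (ii) is dual: if $\ker\,\alpha=\ker\,\beta$ then $x\alpha=y\alpha$ iff $x,y$ agree modulo that common kernel partition iff $x\beta=y\beta$, giving $\alpha\mathcal{R}^*\beta$; conversely, if the kernels differ, pick $x\in\dom\,\alpha$ with $x(\ker\,\alpha)\neq x(\ker\,\beta)$-blocks and use two partial maps $x,y\in S^1$ (again built from identities/constants on suitable convex blocks so as to stay in $S$) that are forced equal after composing with $\alpha$ but not after composing with $\beta$. Part (iii) is then immediate from (i) and (ii) since $\mathcal{H}^*=\mathcal{L}^*\cap\mathcal{R}^*$.

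For part (iv), I would argue $\mathcal{D}^*=\mathcal{L}^*\vee\mathcal{R}^*$ directly. If $|\im\,\alpha|=|\im\,\beta|=r$, choose any $\gamma\in S$ with $\im\,\gamma=\im\,\beta$ and $\ker\,\gamma=\ker\,\alpha$ — concretely take $\gamma$ to be the map with the same kernel classes $A_1<\cdots<A_r$ as $\alpha$ sending $A_i$ to the $i$-th element of $\im\,\beta$; one must check this $\gamma$ is a contraction (and order-preserving when $S$ demands it), which holds because a kernel from $\mathcal{CP}_n$ admits an admissible transversal and the target $\im\,\beta$, being an image of a contraction, is spread out at least as much as the source indices — here Lemma~\ref{p3} and the admissibility discussion are exactly the tools that guarantee such a $\gamma$ exists inside $S$. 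Then $\alpha\mathcal{R}^*\gamma\mathcal{L}^*\beta$, so $\alpha\mathcal{D}^*\beta$. Conversely $\mathcal{L}^*$ and $\mathcal{R}^*$ both preserve $|\im|$ (image equality trivially; kernel equality forces the same number of classes, hence $\ker$-related maps have the same rank), so any zig-zag of $\mathcal{L}^*$/$\mathcal{R}^*$ steps preserves $|\im|$, giving the necessity. \textbf{The main obstacle} I anticipate is the "staying inside $S$" bookkeeping: at each place where I want a separating multiplier (parts (i), (ii)) or a connecting element $\gamma$ (part (iv)), I must exhibit it explicitly as an order-preserving partial contraction rather than an arbitrary partial map, and for $\gamma$ in part (iv) this is genuinely where the contraction geometry (admissible transversals, Lemma~\ref{p3}) does real work rather than being a formality.
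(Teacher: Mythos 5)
Your treatment of parts (i)--(iii) is essentially the paper's: the converses are obtained by passing to the oversemigroup $\mathcal{P}_n$ (where $\mathcal{L}$ and $\mathcal{R}$ are governed by images and kernels), and the forward directions use the partial identity on $\im~\alpha$ (for $\mathcal{L}^{*}$) and constant maps with domain $\dom~\alpha$ (for $\mathcal{R}^{*}$) as multipliers; both kinds of witness are order-preserving contractions and hence lie in $S$, which is exactly the bookkeeping the paper carries out.

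Part (iv), however, has a genuine gap in the sufficiency direction. You propose a single connecting element $\gamma$ with $\ker~\gamma=\ker~\alpha$ and $\im~\gamma=\im~\beta$, justified by the slogan that $\im~\beta$ is ``spread out at least as much as the source indices.'' That is false: the geometry of $\im~\beta$ has nothing to do with the geometry of $\dom~\alpha$. Concretely, in $\mathcal{OCP}_{5}$ take $\alpha$ to be the partial identity on $\{1,2\}$ and $\beta$ the partial identity on $\{1,5\}$; both have rank $2$, but any map with domain $\{1,2\}$ and image $\{1,5\}$ sends points at distance $1$ to points at distance $4$, so no such $\gamma$ exists in $S$, and Lemma~\ref{p3} (which concerns transversals of a fixed kernel, not transplanting one map's image onto another map's kernel) cannot produce one. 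The repair is to use the fact that $\mathcal{D}^{*}$ is the \emph{transitive closure} of $\mathcal{L}^{*}\cup\mathcal{R}^{*}$ and take a longer zig-zag: for a $p$-subset $A=\{a_{1}<\dots<a_{p}\}$ let $1_{A}$ denote the partial identity on $A$ and $\delta_{A}\colon a_{i}\mapsto i$; then $\delta_{A}$ is always an order-preserving contraction (since $|i-j|\leq|a_{i}-a_{j}|$), its image is $\{1,\dots,p\}$ for every $A$, and $\alpha\;\mathcal{L}^{*}\;1_{\im\alpha}\;\mathcal{R}^{*}\;\delta_{\im\alpha}\;\mathcal{L}^{*}\;\delta_{\im\beta}\;\mathcal{R}^{*}\;1_{\im\beta}\;\mathcal{L}^{*}\;\beta$. (For comparison, the paper also uses a single intermediate $\gamma$, with $\ker~\gamma=\ker~\beta$ and $\im~\gamma=\im~\alpha$, but only under the added hypothesis that $x_{i}\mapsto y_{i}$ is an isometry; the example above shows a one-step connection is not available in general.) Your necessity argument for (iv) --- that every $\mathcal{L}^{*}$ or $\mathcal{R}^{*}$ step preserves rank --- is correct and matches the paper.
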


 \begin{proof} (i) Let $\alpha, \beta$ be elements in $S\in\{\mathcal{CP}_{n}, \mathcal{ORCP}_{n}, \mathcal{OCP}_{n}\}$ such that $\alpha\mathcal{L}^{*}\beta$ and $\im~\alpha =\{ x_{1}, x_{2}, \ldots, x_{p}\}$. Further, let $\gamma=\left( \begin{array}{cccc}
                            x_{1} & x_{2} & \ldots & x_{p} \\
                            x_{1} & x_{2} & \ldots & x_{p}
                           \end{array}
\right)$.

Then clearly $\gamma\in S$ and \begin{align*}&~~~~~\alpha.\left( \begin{array}{cccc}
                            x_{1} & x_{2} & \ldots & x_{p} \\
                            x_{1} & x_{2} & \ldots & x_{p}
                           \end{array}
\right)=\alpha\cdot1_{[n]}~~~(by ~~\eqref{f1}) \\ &\Leftrightarrow \beta\cdot\left( \begin{array}{cccc}
                            x_{1} & x_{2} & \ldots & x_{p} \\
                            x_{1} & x_{2} & \ldots & x_{p}
                           \end{array}
\right)=\beta\cdot 1_{[n]}\end{align*}  which implies that $\im~\beta\subseteq \{ x_{1}, x_{2}, \ldots, x_{p}\}= \im~\alpha$. Similarly in the same manner we can have $\im~\alpha\subseteq \im~\beta$. Thus, $\im~\alpha=\im~\beta$.

Conversely, suppose that $\im~\alpha=\im~\beta$. Then by (\cite{Howie1}, Excercise 2.6, 17) $\alpha \mathcal{L}^{\mathcal{P}_{n}}\beta$ and it follows from definition that $\alpha\mathcal{L}^{*}\beta$. Thus, the result follows.

(ii) Suppose that $\alpha,\beta\in S$ and $\alpha\mathcal{R}^{*}\beta$. Then $(x,y)\in \ker~\alpha$ if and only if \begin{align*} x\alpha=y\alpha &\Leftrightarrow
\left(\begin{array}{c}
                                                                                                                                                \dom~\alpha \\
                                                                                                                                                x
                                                                                                                                              \end{array}
\right)\circ\alpha=\left(\begin{array}{c}
                                                                                                                                                \dom~\alpha\\
                                                                                                                                                y
                                                                                                                                              \end{array}
\right)\circ\alpha ~~~~(by~~\eqref{f2})
\\&\Leftrightarrow
\left(\begin{array}{c}
                                                                                                                                                \dom~\alpha \\
                                                                                                                                                x
                                                                                                                                              \end{array}
\right)\circ\beta=\left(\begin{array}{c}
                                                                                                                                                \dom~\alpha \\
                                                                                                                                                y
                                                                                                                                              \end{array}
\right)\circ\beta.\\&\Leftrightarrow x\beta=y\beta \\&\Leftrightarrow (x,~y)\in \ker~\beta.
\end{align*}
 Hence $\ker~\alpha=\ker~\beta$.

 Conversely, suppose that $\ker~\alpha=\ker~\beta$.  Then by (\cite{Howie1}, Excercise 2.6, 17) $\alpha \mathcal{R}^{\mathcal{P}_{n}}\beta$ and it follows from definition that $\alpha\mathcal{R}^{*}\beta$.
(iii) This follows from (i) and (ii).

(iv) Suppose that $\alpha\mathcal{D}^{*}\beta$. Then by (\cite{Howie1}, Proposition 1.5.11) there exist elements $\gamma_{1},~\gamma_{2}, \ldots,~\gamma_{2n-1}\in ~S$ such that $\alpha\mathcal{L}^{*}\gamma_{1}$, $\gamma_{1}\mathcal{R}^{*}\gamma_{2}$, $\gamma_{2}\mathcal{L}^{*}\gamma_{3},\ldots,$ $\gamma_{2n-1}\mathcal{R}^{*}\beta$ for some $n\in ~ \mathbb{{N}}$. Thus, by (i) and (ii) we have $\im~\alpha=\im~\gamma_{1}$, ${\ker}~\gamma_{1}={\ker}~\gamma_{2}$, $\im~\gamma_{2}=\im~\gamma_{3}\ldots,$ $\ker~\gamma_{2n-1}=\ker~\beta$. This implies that $|\im~\alpha|=|\im~\gamma_{1}|=|\dom~\gamma_{1}/ \ker~\gamma_{1}|=|\dom~\gamma_{2}/ \ker~\gamma_{2}|=\ldots=|\dom~\gamma_{2n-1}/ \ker~\gamma_{2n-1}|=|\dom~\beta/ \ker~\beta|=|\im~\beta|.$

Conversely, suppose that $|\im~\alpha|=|\im~\beta|$ where \begin{equation}\label{2} \alpha=\left(\begin{array}{cccc}
                                                                            A_{1} & A_{2} & \ldots & A_{p} \\
                                                                            x_{1} & x_{2} & \ldots & x_{p}
                                                                          \end{array}
\right)~~ and~~ \beta=\left(\begin{array}{cccc}
                                                                            B_{1} & B_{2} & \ldots & B_{p} \\
                                                                            y_{1} & y_{2} & \ldots & y_{p}
                                                                          \end{array}
\right)~~ (p\leq n)\end{equation}

\noindent such that the map $x_{i} \mapsto y_{i}$ ($i=1,2,\ldots,p$) is an isometry. Then the map $\gamma=\left(\begin{array}{cccc}
                                                                            B_{1} & B_{2} & \ldots & B_{p} \\
                                                                            x_{1} & x_{2} & \ldots & x_{p}
                                                                          \end{array}
\right)$ is in $S$. Moreover, by (i) and (ii), it follows that $\alpha\mathcal{L}^{*}\gamma$ and $\gamma\mathcal{R}^{*}\beta$. Thus, by (\cite{Howie1} Proposition 1.5.11) we have $\alpha\mathcal{D}^{*}\gamma$ and the proof is complete.
 \end{proof}

 A \emph{left (resp. right)} $*-$\emph{ideal} of a semigroup $S$ is defined as the \emph{left (rep. right)} ideal of $S$ for which $L^{*}_{a}\subseteq I~(resp.~R^{*}_{a}\subseteq I)$ for all $a\in I$. A subset $I$ of a semigroup $S$ is a $*-$ideal if it is both left and right $*-$ideal of $S$. The principal $\mathcal{J}^{*}$ $*-$ideal, $J^{*}(a)$, generated by $a\in S$ is the intersection of all $*-$ideal  of $S$ containing $a$, where  the relation $\mathcal{J}^{*}$ is defined as: $a\mathcal{J}^{*}b$ if and only if $J^{*}(a)=J^{*}(b)$ for all $a,~b\in S$. We now recognize the following lemma from Fountain \cite{FOUN2}.

 \begin{lemma}[ \cite{FOUN2}]\label{equl} Let $a$,$b$ be elements of a semigroup $S$. Then $b\in J^{*}(a)$ if and only if there are elements $a_{0},a_{1}, \ldots, a_{n}\in S$, $x_{1},x_{2}, \ldots, x_{n}, ~y_{1}, y_{2},\ldots, y_{n}\in S^{1}$ such that $a=a_{0}$, $b=a_{n}$ and $(a_{i}, x_{i}a_{i-1}y_{i})\in\mathcal{D}^{*}$ for $i=1,2,\ldots,n$.

 \end{lemma}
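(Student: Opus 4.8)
The plan is to imitate the classical zig-zag description of principal $\mathcal{J}$-classes, transported to the starred setting, by isolating the set of all $b$ that admit a chain of the prescribed shape and proving it coincides with $J^{*}(a)$ via two inclusions. Concretely, I would let $T$ be the set of all $b\in S$ for which there exist $a_{0},a_{1},\dots,a_{n}\in S$ and $x_{1},\dots,x_{n},y_{1},\dots,y_{n}\in S^{1}$ with $a_{0}=a$, $a_{n}=b$ and $(a_{i},\,x_{i}a_{i-1}y_{i})\in\mathcal{D}^{*}$ for $i=1,\dots,n$; the lemma is exactly the assertion $J^{*}(a)=T$. Taking $n=1$, $x_{1}=y_{1}=1$, $a_{1}=a$ and using reflexivity of $\mathcal{D}^{*}$ gives $a\in T$.

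First I would show that $T$ is a $*$-ideal, which yields $J^{*}(a)\subseteq T$. For the two-sided ideal property, given $b=a_{n}\in T$ with its witnessing chain and $s\in S^{1}$, I append one step with $a_{n+1}=sb$, $x_{n+1}=s$, $y_{n+1}=1$: the required relation $(sb,\,s a_{n})\in\mathcal{D}^{*}$ is just reflexivity (as $sb=sa_{n}$), so $sb\in T$; symmetrically $bs\in T$, and hence $S^{1}TS^{1}\subseteq T$. For the starred closure, given $b\in T$ and $c$ with $c\,\mathcal{L}^{*}b$ (respectively $c\,\mathcal{R}^{*}b$), I append the step $a_{n+1}=c$, $x_{n+1}=y_{n+1}=1$ and invoke $\mathcal{L}^{*}\subseteq\mathcal{D}^{*}$ (respectively $\mathcal{R}^{*}\subseteq\mathcal{D}^{*}$), so that $L^{*}_{b}\cup R^{*}_{b}\subseteq T$. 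Thus $T$ is a $*$-ideal of $S$ containing $a$.

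Next I would prove $T\subseteq J^{*}(a)$ by showing that every $*$-ideal $I$ with $a\in I$ contains $T$. Fixing $b\in T$ with chain $a=a_{0},a_{1},\dots,a_{n}=b$, I argue by induction on $i$ that $a_{i}\in I$: the base case $a_{0}=a\in I$ is given, and if $a_{i-1}\in I$ then $x_{i}a_{i-1}y_{i}\in I$ since $I$ is a two-sided ideal with $x_{i},y_{i}\in S^{1}$, and then $(a_{i},\,x_{i}a_{i-1}y_{i})\in\mathcal{D}^{*}$ together with the $\mathcal{D}^{*}$-closure of $I$ forces $a_{i}\in I$. Hence $b=a_{n}\in I$; as $I$ was an arbitrary $*$-ideal through $a$, we get $b\in J^{*}(a)$. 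Combining the two inclusions gives $J^{*}(a)=T$, which is the claimed characterization.

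The one delicate point — the step I expect to need the most care — is the assertion that a subset closed under both $\mathcal{L}^{*}$ and $\mathcal{R}^{*}$ is automatically closed under $\mathcal{D}^{*}$. Since $\mathcal{L}^{*}$ and $\mathcal{R}^{*}$ need not commute, $\mathcal{D}^{*}$ is the transitive closure of $\mathcal{L}^{*}\cup\mathcal{R}^{*}$ rather than their composite; nonetheless, walking along such a zig-zag one link at a time and applying the two closure hypotheses alternately keeps us inside $I$, so the claim holds. Everything else is routine bookkeeping with the chains, and no feature special to $\mathcal{CP}_{n}$ enters — the argument is valid in an arbitrary semigroup, which is why the result is attributed to Fountain.
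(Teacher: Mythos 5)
Your proof is correct. Note that the paper itself offers no proof of this lemma --- it is quoted verbatim from Fountain's \emph{Abundant Semigroups} paper --- so there is nothing to compare against line by line; your two-inclusion argument (show the set $T$ of chain-reachable elements is a $*$-ideal containing $a$, then show every $*$-ideal containing $a$ absorbs each chain step by the ideal property followed by $\mathcal{D}^{*}$-closure) is exactly the standard argument one would reconstruct from Fountain. You also correctly flag and resolve the one genuine subtlety: a $*$-ideal is by definition only closed under $\mathcal{L}^{*}$ and $\mathcal{R}^{*}$, but since $\mathcal{D}^{*}$ is the join of these two equivalences, i.e.\ the transitive closure of $\mathcal{L}^{*}\cup\mathcal{R}^{*}$, closure under each relation separately propagates along any alternating chain and hence gives closure under $\mathcal{D}^{*}$; this is consistent with how the paper itself unwinds $\mathcal{D}^{*}$ into a zig-zag in the proof of its Theorem~2.1(iv).
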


 As in \cite{uaa}, we immediately have:
 \begin{lemma}\label{equ22} Let $S$ be in $\{\mathcal{CP}_{n}, \mathcal{ORCP}_{n}, \mathcal{OCP}_{n}\}$. Then for $\alpha,~\beta\in S$, $\alpha\in J^{*}(\beta)$ implies $|\im~\alpha|\leq|\im~\beta|$.

 \end{lemma}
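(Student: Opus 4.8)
The plan is to combine Lemma~\ref{equl} with Theorem~\ref{starr}(iv), exactly as is done for the analogous semigroups in \cite{uaa}. Suppose $\alpha\in J^{*}(\beta)$. By Lemma~\ref{equl} there are elements $a_{0},a_{1},\ldots,a_{n}\in S$ and $x_{1},\ldots,x_{n},y_{1},\ldots,y_{n}\in S^{1}$ with $\beta=a_{0}$, $\alpha=a_{n}$, and $(a_{i},x_{i}a_{i-1}y_{i})\in\mathcal{D}^{*}$ for each $i=1,\ldots,n$. By Theorem~\ref{starr}(iv), $(a_{i},x_{i}a_{i-1}y_{i})\in\mathcal{D}^{*}$ means $|\im~a_{i}|=|\im~(x_{i}a_{i-1}y_{i})|$.

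The key step is then the elementary observation that composing a partial transformation with another (on either side) cannot increase the size of the image: for any $\sigma,\tau$ with $\tau\in S^{1}$ we have $\im~(\sigma\tau)\subseteq\im~\tau$ so $|\im~(\sigma\tau)|\leq|\im~\tau|$, and likewise $\im~(\tau\sigma)=(\im~\tau)\sigma\cap$-stuff gives $|\im~(\tau\sigma)|\leq|\im~\tau|$ since $\tau\sigma$ restricted appropriately is a (partial) map whose image is a quotient-sized subset; more simply $|\im~(\tau\sigma)|\le|\im~\tau|$ because $\tau\sigma$ factors through $\tau$. Applying this twice to $x_{i}a_{i-1}y_{i}$ (first $x_i a_{i-1}$ has image of size at most $|\im~a_{i-1}|$, then right-multiplying by $y_i$ does not increase it) yields $|\im~(x_{i}a_{i-1}y_{i})|\leq|\im~a_{i-1}|$. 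Here one should note the harmless edge case $x_i=1$ or $y_i=1$, which only makes the inequality easier.

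Chaining these together: $|\im~a_{i}|=|\im~(x_{i}a_{i-1}y_{i})|\leq|\im~a_{i-1}|$ for every $i$, so
\begin{equation*}
|\im~\alpha|=|\im~a_{n}|\leq|\im~a_{n-1}|\leq\cdots\leq|\im~a_{0}|=|\im~\beta|,
\end{equation*}
which is the desired conclusion. I do not anticipate any genuine obstacle here; the only point that needs a line of care is verifying that left-multiplication as well as right-multiplication is image-non-increasing (right-multiplication is immediate from $\im~(\sigma\tau)\subseteq\im~\tau$; left-multiplication follows because $x_i a_{i-1}$ is a composition whose image is contained in $\im~a_{i-1}$ after the obvious restriction, equivalently because $\ker(x_i a_{i-1})\supseteq \ker a_{i-1}$ on the relevant domain). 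Everything else is the same bookkeeping used in \cite{uaa}, and since $S\in\{\mathcal{CP}_{n},\mathcal{ORCP}_{n},\mathcal{OCP}_{n}\}$ is closed under the compositions involved, no further checking is required.
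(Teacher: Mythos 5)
Your proof is correct and follows essentially the same route as the paper's: apply Lemma~\ref{equl} to get the chain $\beta=a_0,\ldots,a_n=\alpha$ with $(a_i,x_ia_{i-1}y_i)\in\mathcal{D}^{*}$, use Theorem~\ref{starr}(iv) to convert each $\mathcal{D}^{*}$-relation into an equality of image sizes, and then use the fact that multiplying on either side cannot increase $|\im|$ to chain the inequalities down to $|\im~\alpha|\le|\im~\beta|$. The only difference is that you spell out the image-non-increasing step (which the paper leaves implicit, and in fact states with a typo, writing $|\im~\eta_i|$ where $|\im~\eta_{i-1}|$ is meant), so your write-up is if anything slightly more complete.
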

 \begin{proof}
 Let $\alpha\in J^{*}(\beta)$, then by lemma\eqref{equl}, there exist $\eta_{0}, \eta_{1}\ldots,\eta_{n}\in S$,\\ $\rho_{1},\rho_{2},\ldots,\rho_{n},~~\tau_{1}, \tau_{2},\ldots,\tau_{n}\in S^{1}$ such that $\beta=\eta_{0}$, $\alpha=\eta_{n}$ and $(\eta_{i},~~\rho_{i}\eta_{i-1}\tau_{i})\in \mathcal{D}^{*}$ for $i=1,2,\ldots,n$. Thus, by Theorem\eqref{starr}(iv), it implies that $$
 |\im~\eta_{i}|=|\im~\rho_{i}\eta_{i-1}\tau_{i}|\leq |\im~\eta_{i}|~~for~~i=1,2,\ldots,n,$$
which implies that $|\im~\alpha|\leq |\im~\beta|$.
 \end{proof}

 Notice that, $\mathcal{D}^{*}\subseteq \mathcal{J}^{*}$ and together with Lemma~\eqref{equ22} we have:

 \begin{corollary} On the semigroups $\mathcal{CP}_{n}$, $\mathcal{ORCP}_{n}$ or $\mathcal{OCP}_{n}$ we have $\mathcal{D}^{*}= \mathcal{J}^{*}$.

 \end{corollary}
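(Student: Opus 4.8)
The plan is to prove the two inclusions $\mathcal{D}^{*}\subseteq\mathcal{J}^{*}$ and $\mathcal{J}^{*}\subseteq\mathcal{D}^{*}$ separately. The first inclusion holds in every semigroup and requires no work specific to $S\in\{\mathcal{CP}_{n},\mathcal{ORCP}_{n},\mathcal{OCP}_{n}\}$: each principal $*$-ideal $J^{*}(a)$ is simultaneously a left and a right $*$-ideal, so $\mathcal{L}^{*}\subseteq\mathcal{J}^{*}$ and $\mathcal{R}^{*}\subseteq\mathcal{J}^{*}$; since $\mathcal{D}^{*}$ is the join of $\mathcal{L}^{*}$ and $\mathcal{R}^{*}$ and $\mathcal{J}^{*}$ is an equivalence (in particular transitive), the join is contained in $\mathcal{J}^{*}$. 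This is precisely the observation recorded just before the statement, so the entire content lies in the reverse inclusion.

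For $\mathcal{J}^{*}\subseteq\mathcal{D}^{*}$, I would take $\alpha,\beta\in S$ with $\alpha\,\mathcal{J}^{*}\,\beta$, that is, $J^{*}(\alpha)=J^{*}(\beta)$. Then $\alpha\in J^{*}(\alpha)=J^{*}(\beta)$ and, symmetrically, $\beta\in J^{*}(\beta)=J^{*}(\alpha)$. Applying Lemma~\ref{equ22} to the first membership gives $|\im~\alpha|\leq|\im~\beta|$, and applying it to the second gives $|\im~\beta|\leq|\im~\alpha|$; hence $|\im~\alpha|=|\im~\beta|$. By Theorem~\ref{starr}(iv) this is exactly the assertion $\alpha\,\mathcal{D}^{*}\,\beta$. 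Combining the two inclusions yields $\mathcal{D}^{*}=\mathcal{J}^{*}$ on each of the three semigroups.

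There is essentially no remaining obstacle at this point: the substantive work was already carried out in showing that $\mathcal{D}^{*}$ is controlled by the single numerical invariant $|\im~\alpha|$ (Theorem~\ref{starr}(iv)) and that, via Fountain's description of $J^{*}$ in Lemma~\ref{equl}, this invariant can only weakly decrease along the chain of $\mathcal{D}^{*}$-related elements witnessing $\alpha\in J^{*}(\beta)$ (Lemma~\ref{equ22}). The only point one might wish to double-check is that the semigroups in question are genuinely closed under the witnessing products used in those lemmas, but this is already subsumed in the statements of Theorem~\ref{starr} and Lemma~\ref{equ22}, so nothing further is required.
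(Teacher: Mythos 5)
Your proposal is correct and follows exactly the paper's route: the inclusion $\mathcal{D}^{*}\subseteq\mathcal{J}^{*}$ is the general fact noted just before the corollary, and the reverse inclusion comes from applying Lemma~\ref{equ22} in both directions together with Theorem~\ref{starr}(iv). The paper leaves these details implicit; you have simply written them out.
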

   We now are going to show in the next lemma that if $S\in\{\mathcal{CP}_{n},~\mathcal{OCP}_{n},~\mathcal{ORCP}_{n}\}$ then $S$ is left abundant.

  \begin{lemma} Let $S\in\{\mathcal{CP}_{n}, \mathcal{OCP}_{n}, \mathcal{ORCP}_{n}\}$. Then $S$ is left abundant.
 \end{lemma}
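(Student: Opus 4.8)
The plan is to show that every $\mathcal{L}^{*}$-class of $S$ contains an idempotent, using the characterization of $\mathcal{L}^{*}$ from Theorem~\ref{starr}(i): $\alpha\mathcal{L}^{*}\beta$ if and only if $\im~\alpha=\im~\beta$. Thus it suffices to exhibit, for each $\alpha\in S$, an idempotent $\varepsilon\in S$ with $\im~\varepsilon=\im~\alpha$. Write $\alpha$ as in \eqref{1} with $\im~\alpha=\{x_{1}<x_{2}<\cdots<x_{p}\}$, and take the obvious candidate
\begin{equation*}
\varepsilon=\left(\begin{array}{cccc} x_{1} & x_{2} & \ldots & x_{p} \\ x_{1} & x_{2} & \ldots & x_{p}\end{array}\right),
\end{equation*}
i.e. the partial identity on $\im~\alpha$. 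Then $\varepsilon^{2}=\varepsilon$, $\im~\varepsilon=F(\varepsilon)=\im~\alpha$, so $\varepsilon$ is idempotent with the required image, and hence $\varepsilon\,\mathcal{L}^{*}\,\alpha$.

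The one genuine point to verify is that $\varepsilon$ actually lies in $S$ for each of the three semigroups. Since $\varepsilon$ fixes every point of its domain, for any $u,v\in\dom~\varepsilon$ we have $|u\varepsilon-v\varepsilon|=|u-v|$, so $\varepsilon$ is in fact an isometry and in particular a partial contraction; thus $\varepsilon\in\mathcal{CP}_{n}$. Moreover $u\le v$ trivially implies $u\varepsilon=u\le v=v\varepsilon$, so $\varepsilon$ is order preserving, giving $\varepsilon\in\mathcal{OCP}_{n}\subseteq\mathcal{ORCP}_{n}$. Hence in all three cases $\varepsilon\in S$, and this is the only step requiring care — there is no real obstacle here, just the membership check, which is immediate because partial identities are isometries and order preserving.

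Finally I would assemble the argument: given any $\alpha\in S$, the idempotent $\varepsilon$ constructed above satisfies $\im~\varepsilon=\im~\alpha$, so by Theorem~\ref{starr}(i) it lies in the $\mathcal{L}^{*}$-class $L^{*}_{\alpha}$ of $\alpha$. Therefore every $\mathcal{L}^{*}$-class of $S$ contains an idempotent, which by definition means $S$ is left abundant, completing the proof. (One could remark in passing that the same $\varepsilon$ being an isometry shows a bit more, but this is not needed for the statement.)
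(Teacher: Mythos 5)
Your proof is correct and follows essentially the same route as the paper: take the partial identity on $\im~\alpha$, observe it is an idempotent of $S$ with the same image as $\alpha$, and invoke Theorem~\ref{starr}(i). You are in fact slightly more careful than the paper, since you explicitly verify membership of the partial identity in each of the three semigroups.
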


 \begin{proof}
 Let $\alpha\in S$ and $L^{*}_{\alpha}$ be an $\mathcal{L}^{*}-class$ of $\alpha$ in $S$, where $\alpha=\left(\begin{array}{cccc}
                                                                            A_{1} & A_{2} & \ldots & A_{p} \\
                                                                            x_{1} & x_{2} & \ldots & x_{p}
                                                                          \end{array}
\right)$ ($1\leq p\leq n$). Define  $\gamma=\left(\begin{array}{cccc}
                                                                            x_{1} & x_{2} & \ldots & x_{p} \\
                                                                            x_{1} & x_{2} & \ldots & x_{p}
                                                                          \end{array}
\right).$ Clearly $\gamma^{2}=\gamma\in S$ and $\im~\alpha=\im~\gamma$, therefore by Theorem\eqref{starr}(i), $\alpha \mathcal{L}^{*}\gamma$, which means that $\gamma\in L^{*}_\alpha$. Thus, $S$ is left abundant, as required.

 \end{proof}

\begin{lemma}Let $S\in\{\mathcal{CP}_{n}, \mathcal{ORCP}_{n}, \mathcal{OCP}_{n}\}$. Then for $ n\geq 4$, $S$ is  not right abundant.
\end{lemma}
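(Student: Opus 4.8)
The plan is to exhibit, for each $n \geq 4$, a single element $\alpha \in S$ whose $\mathcal{R}^{*}$-class contains no idempotent, and by Theorem~\ref{starr}(ii) this amounts to finding $\alpha$ such that no idempotent of $S$ has the same kernel as $\alpha$. Recall that $\beta \in \mathcal{P}_{n}$ is idempotent if and only if each block $A_i$ of $\textbf{Ker}~\beta$ is stationary, i.e. contains its own image point $x_i \in A_i$; equivalently, $\textbf{Ker}~\beta$ admits a transversal (namely $\{x_1,\dots,x_p\}$) which is itself an admissible transversal realizing an idempotent contraction. So the strategy is: pick a partition $\mathcal{K}$ of a subset of $[n]$ that (a) is the kernel of \emph{some} element of $S$, so that $L^{*}$ and $R^{*}$ classes are nonempty, but (b) admits \emph{no relatively convex transversal}, hence a fortiori no transversal making it an idempotent — because Lemma~\ref{tofa} forces the image of a convex (hence the relevant transversal) block structure to behave convexly, and more directly, a stationary transversal $\{x_i : x_i \in A_i\}$ of a contraction is in particular an admissible transversal, and in the ordered setting one checks it must be relatively convex.

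Concretely, for $n \geq 4$ I would take $\alpha \in \mathcal{OCP}_{n}$ (so that the same example works for all three semigroups, since $\mathcal{OCP}_{n} \subseteq \mathcal{ORCP}_{n} \subseteq \mathcal{CP}_{n}$) of the form
\[
\alpha = \left(\begin{array}{ccc} \{1\} & \{2,3\} & \{4\} \\ 1 & 2 & 3 \end{array}\right),
\]
possibly with the remaining points $5,\dots,n$ omitted from the domain (partial is allowed), so that $p = 3$ with a middle block $A_2 = \{2,3\}$ of size $\geq 2$. One first checks $\alpha \in \mathcal{OCP}_{n}$: it is order preserving and $|x\alpha - y\alpha| \leq |x-y|$ on the three blocks. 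Then, invoking Corollary~\ref{p2} (with $k = 2 \in \{2,\dots,p-1\}$ and $|A_2| \geq 2$), the partition $\textbf{Ker}~\alpha = \{A_1 < A_2 < A_3\}$ has \emph{no relatively convex transversal}.

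The key step is then to argue that an idempotent $\beta \in S$ with $\ker~\beta = \ker~\alpha$ cannot exist. If such $\beta$ existed, its image set $\{z_1 < z_2 < z_3\}$ with $z_i \in A_i$ would be a transversal of $\textbf{Ker}~\alpha$, and since $\beta$ is a contraction this transversal is admissible; moreover $z_1 \in A_1 = \{1\}$, $z_3 \in A_3 = \{4\}$, so $z_1 = 1$, $z_3 = 4$, and $z_2 \in \{2,3\}$ — but $\dom~\alpha$ contains no point strictly between $2$ and $3$ outside $T_\beta$, so $T_\beta = \{1, z_2, 4\}$ is automatically relatively convex (any $w \in \dom~\alpha$ with $1 \le w \le 4$ lies in $\{1,2,3,4\}$, and whichever of $2,3$ is not $z_2$ lies in $A_2$, not forced into $T_\beta$ — here I must be careful and instead argue directly: a stationary transversal of a middle block of size $\ge 2$ contradicts Corollary~\ref{p2}, since being stationary and from a contraction makes it relatively convex by the argument in \cite{mmz}). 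The cleanest route is: by Theorem~\ref{starr}(ii), $R^{*}_{\alpha}$ contains an idempotent iff some idempotent shares $\alpha$'s kernel; an idempotent's block-representative set is a relatively convex admissible transversal (this is the content one extracts from the regularity/transversal analysis of \cite{mmz}, cf. Lemmas~\ref{p1}–\ref{p3}); Corollary~\ref{p2} denies such a transversal; hence $R^{*}_{\alpha}$ has no idempotent and $S$ is not right abundant.

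The main obstacle I anticipate is the precise justification that an idempotent with a given kernel forces a relatively convex transversal: one must show that if $\beta = \bigl(\begin{smallmatrix} A_1 & \cdots & A_p \\ z_1 & \cdots & z_p \end{smallmatrix}\bigr)$ is idempotent (so $z_i \in A_i$) and $\beta$ is a contraction, then $\{z_1,\dots,z_p\}$ is relatively convex — i.e. if $z_i \le w \le z_j$ with $w \in \dom~\beta$, then $w \in \{z_1,\dots,z_p\}$. This should follow because $w$ lies in some block $A_k$ with $i \le k \le j$ (as blocks are ordered intervals-of-blocks and $z_i, z_j$ pin down the block range), and $w\beta = z_k$ with $|z_k - z_i| \le |w - z_i|$ and $|z_j - z_k| \le |z_j - w|$ forcing $z_k = w$ when $w$ is squeezed between consecutive $z$'s; combined with $z_i \in A_i$ this localizes $w$ to its own representative. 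Once this lemma is in hand, the contradiction with Corollary~\ref{p2} is immediate, and the proof concludes that for all $n \ge 4$ the element $\alpha$ above witnesses that $S$ is not right abundant.
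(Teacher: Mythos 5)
Your proposal is correct and takes essentially the same approach as the paper: the identical witness $\alpha=\left(\begin{smallmatrix}1 & \{2,3\} & 4\\ 1&2&3\end{smallmatrix}\right)$, together with Theorem~\ref{starr}(ii) reducing the question to whether any idempotent of $S$ shares $\ker\alpha$. The only difference is in the final verification: the paper simply lists the four elements of $R^{*}_{\alpha}$ and observes none is idempotent, whereas you argue abstractly that a stationary transversal of an idempotent contraction must be relatively convex (your triangle-inequality argument for this is sound) and then invoke Corollary~\ref{p2}; both routes work.
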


\begin{proof} Let $n=4$ and consider $\alpha=\left(\begin{array}{ccc}
                                                                            1 & \{2,3\} & 4  \\
                                                                            1 & 2 &  3
                                                                          \end{array}
\right)$. Clearly $\alpha$ is in $S$ and $R^{*}_\alpha=\left\{   \left(\begin{array}{ccc}
                                                                            1 & \{2,3\} & 4  \\
                                                                            1 & 2 &  3
                                                                          \end{array}
\right),\left(\begin{array}{ccc}
                                                                            1 & \{2,3\} & 4  \\
                                                                            3 & 2 &  1
                                                                          \end{array}
\right), \left(\begin{array}{ccc}
                                                                            1 & \{2,3\} & 4  \\
                                                                            2 & 3 &  4
                                                                          \end{array}
\right), \left(\begin{array}{ccc}
                                                                            1 & \{2,3\} & 4  \\
                                                                            4 & 3 &  2
                                                                          \end{array}
\right)   \right\}$, which has no idempotent element.
\end{proof}

 \begin{remark} Let $S\in\{\mathcal{CP}_{n}, \mathcal{ORCP}_{n}, \mathcal{OCP}_{n}\}$. Then for $1\leq n\leq 3$, $S$ is right abundant.
\end{remark}

\section{On strongly regular elements in $\mathcal{ORCP}_{n}$ }
 In 2018, Ali \emph{et al.} \cite{mmz} characterized the regular elements in $\mathcal{ORCP}_{n}$ and showed that the semigroup $\mathcal{ORCP}_{n}$ is not regular. We begin the section with a remark concerning the product of regular elements in $\mathcal{ORCP}_{n}$.

\begin{remark}
Product of regular elements in   $\mathcal{ORCP}_{n}$ or  $\mathcal{OCP}_{n}$ is not necessarily regular.  Consider $\alpha=\left(\begin{array}{cc}
                                       1 & 3   \\
                                       1 & 3
                                     \end{array}
 \right)$ and $\beta=\left(\begin{array}{cc}
                                       1 & \{2,3\}   \\
                                       1 & 2
                                     \end{array}
 \right)\in \mathcal{ORCP}_{3}$. Then $\alpha\beta=\left(\begin{array}{cc}
                                       1 & 3   \\
                                       1 & 2
                                     \end{array}
 \right)$ is not regular.
\end{remark}

Next, we recognize the following result (due to Hall (\cite{Hall}, Proposition 1.)) which is crucial in proving some of the results below.
\begin{proposition} (\cite{Hall}, Proposition 1.)\label{idp1}   Let $S$ be an arbitrary semigroup. Then the following are equivalent:
\begin{itemize}
\item[(i)] For all idempotents $e$ and $f$ of $S$, the element $ef$ is regular;
\item[(ii)] $Reg(S)$ is regular subsemigroup;
\item[(iii)]    $\langle E(S) \rangle$  is a regular semigroup.
\end{itemize}
\end{proposition}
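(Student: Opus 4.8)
The statement is a classical theorem of Hall, and I would prove it by establishing the cycle of implications (iii) $\Rightarrow$ (i) $\Rightarrow$ (ii) $\Rightarrow$ (iii). The implication (iii) $\Rightarrow$ (i) is immediate: if $\langle E(S)\rangle$ is regular, then for any $e,f\in E(S)$ the element $ef$ belongs to $\langle E(S)\rangle$ and hence has an inverse there, so $ef$ is regular in $S$. For (i) $\Rightarrow$ (ii) the key point is that, under (i), the product of two regular elements is again regular. Given $a,b\in Reg(S)$, choose $a^{*},b^{*}\in S$ with $aa^{*}a=a$ and $bb^{*}b=b$; then $f:=a^{*}a$ and $g:=bb^{*}$ are idempotents satisfying $af=a$ and $gb=b$, so $ab=a(fg)b$. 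By (i) there is $t\in S$ with $fg\,t\,fg=fg$, and one checks directly that $z:=b^{*}gtfa^{*}$ satisfies $abzab=ab$; replacing $z$ by $zabz$ gives a genuine inverse of $ab$. Hence $Reg(S)$ is closed under multiplication, and since $a^{*}aa^{*}$ is always a genuine inverse of $a$ that lies in $Reg(S)$, the subsemigroup $Reg(S)$ is itself a regular semigroup, which is (ii).

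For (ii) $\Rightarrow$ (iii) put $T=Reg(S)$. Since $E(S)\subseteq Reg(S)$ and every idempotent of the subsemigroup $T$ is an idempotent of $S$, we have $E(T)=E(S)$, hence $\langle E(S)\rangle=\langle E(T)\rangle$. By (ii) the semigroup $T$ is regular, so it remains to apply the theorem of FitzGerald that the subsemigroup generated by the idempotents of a regular semigroup is again regular; applied to $T$, this yields that $\langle E(T)\rangle=\langle E(S)\rangle$ is regular.

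The main obstacle is this last step. Establishing that arbitrarily long products of idempotents are regular is the easy half and follows, by the same sandwich computation iterated, from (i); the genuine difficulty is to show that, for $a=e_{1}\cdots e_{n}$ with each $e_{i}\in E(S)$, the element $a$ admits an inverse that again lies in $\langle E(S)\rangle$. The naive sandwich inverse of $a$ involves an auxiliary element of $S$ that need not be a product of idempotents, and removing it is precisely the content of FitzGerald's inductive argument on the word length $n$. In a complete write-up I would either cite that result or reproduce the induction.
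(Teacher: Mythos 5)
The paper offers no proof of this proposition at all --- it is quoted verbatim from Hall's 1982 \emph{Semigroup Forum} paper --- so there is nothing internal to compare your argument against; I can only assess it on its own terms. Your cycle (iii) $\Rightarrow$ (i) $\Rightarrow$ (ii) $\Rightarrow$ (iii) is sound. The implication (iii) $\Rightarrow$ (i) is indeed immediate. Your sandwich computation for (i) $\Rightarrow$ (ii) checks out: with $f=a^{*}a$, $g=bb^{*}$ and $x=gtf$ one has $fgxfg=fg$, and then $ab\,z\,ab = a g x f b = (afg)\,x\,(fgb) = a(fgxfg)b = afgb = ab$, so $ab$ is regular; and $a^{*}aa^{*}$ is an inverse of $a$ lying in $Reg(S)$, so $Reg(S)$ is a regular subsemigroup. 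The observation $E(Reg(S))=E(S)$ for (ii) $\Rightarrow$ (iii) is also correct. The only point at which your write-up is not self-contained is the final appeal to FitzGerald's theorem (that the idempotent-generated subsemigroup of a regular semigroup is regular, via the construction of an inverse of $e_{1}\cdots e_{n}$ that is itself a product of $n$ idempotents); you identify this honestly as the genuine content of the hard implication, and this is in fact exactly how Hall's original argument proceeds. For a complete proof you would need to cite FitzGerald explicitly or reproduce his induction, but as a reconstruction of a result the paper itself merely cites, your proposal is correct and appropriately structured.
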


 We now introduce the following definition. A regular element $\alpha$  in $\mathcal{ORCP}_{n}$  is said to be \emph{strongly regular} if and only if $\textbf{Ker}~\alpha$ has a convex transversal $T_{\alpha}$.
 For example, the contraction $\alpha=\left(\begin{array}{cc}
                                       1 & 3   \\
                                       3 & 1
                                     \end{array}
 \right)\in \mathcal{ORCP}_{3}$ is regular but not strongly regular, since $T_{\alpha}=\{1,3\}$ is not convex. On the other hand, the contraction $\beta=\left(\begin{array}{cc}
                                       1 & \{2,3\}   \\
                                       3 & 4
                                     \end{array}
 \right)\in \mathcal{ORCP}_{3}$ is strongly regular, since $T_{\beta}=\{1,2\}$ is convex.

  Now let $SReg(\mathcal{ORCP}_{n})$ denote the set of all strongly regular elements in $\mathcal{ORCP}_{n}$. Then we have the following two lemmas:

\begin{lemma}\label{mm} Let $\epsilon$ be an idempotent element in $SReg(\mathcal{ORCP}_{n})$.  Then $\alpha$ can be expressed as $\left(\begin{array}{cccccc}
                                                                            A_{1} & a+2 & a+3 & \ldots & a+p-1 & A_{p} \\
                                                                            a+1 & a+2 & a+3 & \ldots & a+p-1 & a+ p
                                                                          \end{array}
\right)$,  where $a+1=\max A_{1}$, $a+p=\min A_{p}$.
\end{lemma}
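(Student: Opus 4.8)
The plan is to squeeze $\epsilon$ into the stated shape by successively using that it is idempotent, that it is a contraction, and that it carries a convex transversal. Write $\epsilon=\left(\begin{array}{cccc} A_{1}&A_{2}&\cdots&A_{p}\\ x_{1}&x_{2}&\cdots&x_{p}\end{array}\right)$; I may assume $p\geq 2$, since for $p=1$ the displayed form is degenerate. First, $\epsilon^{2}=\epsilon$ means the blocks of $\textbf{Ker}~\epsilon$ are stationary, i.e.\ $x_{i}\in A_{i}$ for all $i$. Since $A_{1}<A_{2}<\cdots<A_{p}$ and $x_{i}\in A_{i}$ with $p\geq 2$, we get $x_{1}<x_{2}$, so $\epsilon$ cannot be order reversing and is therefore order preserving; hence $x_{1}<x_{2}<\cdots<x_{p}$, and $\max A_{i}<\min A_{i+1}$ for $1\leq i\leq p-1$.

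Next I bring in strong regularity. Let $T_{\epsilon}$ be a convex transversal of $\textbf{Ker}~\epsilon$. A convex set of $p$ integers is a block of $p$ consecutive integers, so write $T_{\epsilon}=\{t_{1}<t_{2}<\cdots<t_{p}\}$ with $t_{i+1}=t_{i}+1$; as the blocks are $<$-ordered and $T_{\epsilon}$ meets each of them exactly once, $t_{i}\in A_{i}$. Fix $i\in\{1,\dots,p-1\}$: then $t_{i}\leq\max A_{i}$ and $t_{i+1}\geq\min A_{i+1}$, so $\min A_{i+1}-\max A_{i}\leq t_{i+1}-t_{i}=1$, and together with $\max A_{i}<\min A_{i+1}$ this forces $\min A_{i+1}-\max A_{i}=1$, hence $t_{i}=\max A_{i}$ and $t_{i+1}=\min A_{i+1}$. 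Letting $i$ vary, $t_{i}=\max A_{i}$ for $1\leq i\leq p-1$ and $t_{i}=\min A_{i}$ for $2\leq i\leq p$; in particular $A_{i}=\{t_{i}\}$ for $2\leq i\leq p-1$ (this last point is also immediate from Corollary~\ref{p2}, a convex transversal being relatively convex), while $\max A_{1}=t_{1}$ and $\min A_{p}=t_{p}$.

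Finally I locate the $x_{i}$ using the contraction inequality. For $1\leq i\leq p-1$, applying $|x_{i}-x_{i+1}|\leq|a-b|$ to $a=\max A_{i}\in A_{i}$ and $b=\min A_{i+1}\in A_{i+1}$ gives $x_{i+1}-x_{i}\leq\min A_{i+1}-\max A_{i}=1$; since $x_{i}<x_{i+1}$ are integers, $x_{i+1}=x_{i}+1$. Thus $x_{i}-t_{i}$ does not depend on $i$; since $x_{1}\in A_{1}$ gives $x_{1}\leq\max A_{1}=t_{1}$ this difference is $\leq 0$, and since $x_{p}\in A_{p}$ gives $x_{p}\geq\min A_{p}=t_{p}$ it is $\geq 0$, so $x_{i}=t_{i}$ for every $i$. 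Writing $a=t_{1}-1$ yields $x_{i}=a+i$, $\max A_{1}=a+1$, $\min A_{p}=a+p$, and $A_{i}=\{a+i\}$ for $2\leq i\leq p-1$, which is precisely the claimed expression for $\epsilon$.

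I do not anticipate a genuine obstacle: the proof is a chain of squeeze arguments. The crux is the observation that convexity of $T_{\epsilon}$ already forces the gap $\min A_{i+1}-\max A_{i}$ to equal $1$; this is what makes the transversal entries coincide with the block extrema and, feeding into the contraction inequality, makes the image consecutive, after which $x_{i}=a+i$ is forced. The only other things needing attention are checking that $\epsilon$ is order preserving rather than order reversing, and the small/degenerate cases $p=1$ and the empty ``middle'' when $p=2$.
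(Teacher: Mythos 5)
Your proof is correct, and it takes a genuinely different route from the paper's. The paper assembles the result from three cited facts: the contrapositive of Corollary~\ref{p2} (a relatively convex transversal forces the middle blocks to be singletons), Lemma~\ref{tofa} (the image of the convex transversal, i.e.\ $\im~\epsilon$, is convex), and Lemma~\ref{co2} (the regularity criterion, used at the end to pin down $\max A_1$ and $\min A_p$). You instead argue from first principles: idempotency gives stationarity and hence order preservation and $x_1<\cdots<x_p$; the convexity of $T_\epsilon$ gives, by a squeeze on $t_{i+1}-t_i=1$ against $\max A_i<\min A_{i+1}$, that consecutive blocks abut with gap exactly $1$, that the middle blocks are singletons, and that $t_1=\max A_1$, $t_p=\min A_p$; and the contraction inequality applied to $\max A_i$ and $\min A_{i+1}$ plus a final squeeze forces $x_i=t_i$. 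Your version buys three things: it is self-contained (no appeal to the regularity characterization at all --- idempotents are regular for free); it covers $p=2$ uniformly, whereas Lemma~\ref{co2} as stated requires $|\im~\alpha|\geq 3$; and it makes explicit why the middle singletons are the \emph{consecutive} integers $a+2,\dots,a+p-1$, a point the paper's proof passes over silently when it writes $\textbf{Ker}~\epsilon=\{A_1<\{a+2\}<\cdots<\{a+p-1\}<A_p\}$. Your explicit exclusion of $p=1$ is also sensible, since the displayed form tacitly assumes $A_1\neq A_p$.
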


\begin{proof}
Let $\epsilon\in SReg(\mathcal{ORCP}_{n})$ be of height $p$. Then by the contrapositive of Lemma\eqref{p2} we see that  $\textbf{Ker}~\epsilon=\{A_{1}<\{a+2\}<\ldots<\{a+p-1\}<A_{p}\}$, and so $T_{\epsilon}$ is convex. Thus by Lemma \eqref{tofa},  $T_{\epsilon}\epsilon=\im ~\epsilon$ is convex and hence  $$\epsilon=\left(\begin{array}{cccccc}
                                                                            A_{1} & a+2 & a+3 & \ldots & a+p-1 & A_{p} \\
                                                                            x+1 & x+2 & x+3 & \ldots & x+p-1 & x+p
                                                                          \end{array}
\right).$$
\noindent However, since $\epsilon$ is an idempotent then the blocks of $\textbf{Ker}~\epsilon$ are stationary i. e., $x+1\in A_{1}$, $x+p\in A_{p}$, and $x+i=a+i$ ($i=2,\ldots,p-1$), which implies $x=a$. However, since $\epsilon$ is regular then it follows by Lemma\eqref{co2} that  $\max A_{1}-(a+1)=\min A_{p}-(a+p)=0$, showing that $\max A_{1}=a$ and $\min A_{p}=a+p$, as required.

\end{proof}

\begin{lemma}\label{kkk} Let $\epsilon=\left(\begin{array}{ccccc}
                                                                            A_{1} & a+2 &  \ldots & a+p-1 & A_{p} \\
                                                                            a+{1} & a+2 &  \ldots & a+p-1 & a+ p
                                                                          \end{array}
\right)$ and \\$\tau=\left(\begin{array}{ccccc}
                                                                            B_{1} & b+2 &  \ldots & b+s-1 & B_{s} \\
                                                                            b+{1} & b+2 &  \ldots & b+s-1 & b+ s
                                                                          \end{array}
\right)$ be two idempotent elements in $SReg(\mathcal{ORCP}_{n})$ with $p,s=1,2, \ldots,n$. Then $\epsilon\tau$ is  strongly regular.

\end{lemma}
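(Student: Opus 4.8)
The plan is to analyze the composite $\delta:=\epsilon\tau$ by comparing the two images $\im~\epsilon=\{a+1,a+2,\ldots,a+p\}$ and $\im~\tau=\{b+1,b+2,\ldots,b+s\}$. All I will use about $\epsilon$ and $\tau$ is what is immediate from their displayed forms (Lemma~\eqref{mm}): each is order preserving, each is a contraction, each has an image which is a set of consecutive integers contained in its own domain, and, being idempotent, each fixes its image pointwise. I set $P:=\im~\epsilon\cap\im~\tau$ (a set of consecutive integers, possibly empty) and split into the case $P=\nullset$ and the case $P\neq\nullset$.

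If $P=\nullset$, the two integer intervals are disjoint, so one lies entirely below the other; say $a+p<b+1$ (the other possibility is handled symmetrically). For $x\in\dom~\delta$ put $u=x\epsilon\in\im~\epsilon$, so $u\leq a+p<b+1$ and $u\in\dom~\tau$; since $\tau$ is order preserving with $b+1\in\dom~\tau$ and $(b+1)\tau=b+1$, we get $x\delta=u\tau\leq b+1$, while $x\delta\in\im~\tau$ forces $x\delta\geq b+1$. Hence $x\delta=b+1$ for all $x$, i.e.\ $\delta$ is the empty map or a constant map. Such a map is regular and has at most one kernel class, so it trivially admits a convex transversal; thus $\delta$ is strongly regular.

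Now assume $P\neq\nullset$. The crux of the argument is the claim that $\im~\delta=P$. One inclusion is routine: for $w\in P$ we have $w\in\im~\epsilon\subseteq\dom~\epsilon$ with $w\epsilon=w$, and $w\in\im~\tau\subseteq\dom~\tau$, so $w\in\dom~\delta$ and $w\delta=(w\epsilon)\tau=w\tau=w$; hence $P=P\delta\subseteq\im~\delta$. For the reverse inclusion, fix $q\in P$ (so $q\in\{a+1,\ldots,a+p\}$ and $q\tau=q$), take $w\in\im~\delta$, write $w=x\delta$ with $x\in\dom~\delta$, and set $u=x\epsilon\in\{a+1,\ldots,a+p\}\cap\dom~\tau$, so $w=u\tau$. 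Comparing $u$ with $q$: if $u\geq q$ then order preservation gives $q=q\tau\leq u\tau=w$ and the contraction property gives $w-q=u\tau-q\tau\leq u-q$, so $q\leq w\leq u$; if $u\leq q$ the symmetric inequalities give $u\leq w\leq q$. In either case $w$ is squeezed between two elements of $\{a+1,\ldots,a+p\}$, so $w\in\im~\epsilon$, and since also $w\in\im~\tau$ we conclude $w\in P$. This squeezing step is where the contraction hypothesis on $\tau$ is genuinely used, and it is the only real obstacle; everything else is bookkeeping.

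Granted $\im~\delta=P$, the conclusion follows quickly. Since $\delta$ fixes $P$ pointwise and $P\subseteq\dom~\delta$, every $x\in\dom~\delta$ has $x\delta\in P\subseteq\dom~\delta$ and $x\delta^{2}=(x\delta)\delta=x\delta$, so $\delta^{2}=\delta$; in particular $\delta$ is regular. Finally $P$ is a convex transversal of $\textbf{Ker}~\delta$: it is an interval of consecutive integers, hence convex, and for each class $w\delta^{-1}$ $(w\in\im~\delta=P)$ we have $w\in w\delta^{-1}\cap P$, while any $w'\in w\delta^{-1}\cap P$ satisfies $w'=w'\delta=w$, so $w\delta^{-1}\cap P=\{w\}$. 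Therefore $\delta=\epsilon\tau$ is strongly regular, completing the proof.
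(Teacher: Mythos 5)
Your proof is correct, and it takes a genuinely different route from the paper's. The paper argues block by block: it sets $c=\max\{a+1,b+1\}$, $d=\min\{a+p,b+s\}$, assumes the fixed-point sets intersect, and then in four cases (according to which endpoints realize $c$ and $d$) explicitly describes the kernel classes $D_{1},\ldots,D_{k}$ of $\epsilon\tau$ and checks that the resulting map is a strongly regular idempotent. You instead work with images: you show that $\im(\epsilon\tau)=\im\epsilon\cap\im\tau$ and that $\epsilon\tau$ fixes this set pointwise, so $\epsilon\tau$ is an idempotent whose image is an interval of consecutive integers and hence is itself a convex transversal of $\mathbf{Ker}(\epsilon\tau)$; the only nontrivial step is the squeezing argument $q\leq w\leq u$ (or its mirror), which is exactly where order preservation and the contraction property of $\tau$ enter. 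Your version buys two things: it is uniform (no case split on which of $a+1,b+1$ and $a+p,b+s$ is extremal), and it covers the situation $\im\epsilon\cap\im\tau=\emptyset$, where $\epsilon\tau$ degenerates to a constant or empty map --- a case the paper's proof simply assumes away with the hypothesis $F(\epsilon)\cap F(\tau)\neq\emptyset$ and never returns to. The paper's computation, on the other hand, yields the explicit shape of the kernel classes of the product (e.g.\ $D_{1}=A_{1}\cup[b+1,a+1]$), which is more information than strictly needed for the lemma but makes the structure of $\langle E\rangle$ visible.
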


\begin{proof} Let $c=\max \{a+1,b+1\}$ and $d=\min \{a+p,b+s\}$. Suppose also that $F(\epsilon)\cap F(\tau) \neq \emptyset$ and the blocks of the product $\epsilon\tau$ are $D_1, D_2, \dots, D_k$, where $k\leq \min\{p,s\}$. Then clearly, $c\leq d$. Thus we shall consider four cases:

\begin{itemize}
\item[(i)] If $a+1=c$ and $a+p=d$ then $b+1\leq a+1$ and $a+p\leq b+s$. Using convexity, it is now not difficult to see that $D_1=A_1\cup [b+1,a+1]$, $D_i=\{a+i\}~(i=2,\dots, k-1)$ and $D_k= [a+p,b+s]\cup A_p$. Moreover, $D_1\epsilon\tau=a+1=\max D_1$ and $D_k\epsilon\tau=a+p=\min D_k$. Hence $\epsilon\tau$ is a strongly regular idempotent.

The other three cases listed below are handled similarly.
\item[(ii)] If $a+1=c$ and $b+s=d$;
\item[(iii)] If $b+1=c$ and $a+p=d$;
\item[(iv)] If $b+1=c$ and $b+s=d$.

\end{itemize}
\end{proof}

Now as a consequence, from Proposition\eqref{idp1} and Lemma\eqref{kkk}, we  readily have:

\begin{theorem} Let $S=SReg(\mathcal{ORCP}_{n})$. Then  $S$ is a regular subsemigroup of $\mathcal{ORCP}_{n}$.
\end{theorem}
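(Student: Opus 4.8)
The plan is to invoke Proposition~\ref{idp1} and reduce the whole claim to a statement about products of idempotents. Concretely, I would first observe that $S=SReg(\mathcal{ORCP}_n)$ is by definition a set of regular elements of $\mathcal{ORCP}_n$, and that every idempotent of $\mathcal{ORCP}_n$ that lies in $S$ is described by Lemma~\ref{mm}. Then the equivalence (i)$\Leftrightarrow$(iii) of Proposition~\ref{idp1}, applied to the semigroup $T=\langle E(S)\rangle$, says that $\langle E(T)\rangle$ is regular as soon as the product of any two idempotents of $T$ is regular; but the idempotents of $T$ relevant here are exactly the elements of $E(S)$, and Lemma~\ref{kkk} gives precisely that $\epsilon\tau$ is strongly regular (hence in $S$, hence regular) for any two idempotents $\epsilon,\tau\in E(S)$. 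So the engine of the proof is already built.

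The main structural point to nail down is that $S$ really equals $\langle E(S)\rangle$, i.e.\ that every strongly regular element is a product of idempotent strongly regular elements; otherwise one has only shown $\langle E(S)\rangle$ is a regular subsemigroup, not that $S$ itself is. First I would take an arbitrary $\alpha\in SReg(\mathcal{ORCP}_n)$, use the contrapositive of Corollary~\ref{p2} together with Lemma~\ref{co2} to write $\alpha$ in the normal form with $\mathbf{Ker}\,\alpha=\{A_1<\{a+2\}<\cdots<\{a+p-1\}<A_p\}$ and convex image, and then exhibit $\alpha$ as a product of at most two idempotents of the shape given in Lemma~\ref{mm}: one idempotent carrying the domain partition $\mathbf{Ker}\,\alpha$ to the convex transversal $T_\alpha$, and one carrying $T_\alpha$ isometrically onto $\im\,\alpha$ (possible because $\alpha|_{T_\alpha}$ is an isometry by Lemma~\ref{co2}, and order preserving or order reversing). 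If an order-reversing factor is needed one checks it is still idempotent only when it fixes its transversal pointwise, so a short case analysis on whether $\alpha$ moves its transversal may be required; alternatively one shows directly that the set $SReg(\mathcal{ORCP}_n)$ is closed under products using Lemma~\ref{kkk}'s argument verbatim for arbitrary strongly regular $\epsilon,\tau$ (not just idempotent ones), which sidesteps the generation question entirely.

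Having established closure, I would then verify $S$ is a subsemigroup (closure under the binary operation, which is exactly what Lemma~\ref{kkk} or its generalisation gives) and that it is regular as a semigroup in its own right: for $\alpha\in S$ one needs an inverse $\alpha'$ with $\alpha\alpha'\alpha=\alpha$ and $\alpha'\in S$. Since $\alpha$ is regular in $\mathcal{ORCP}_n$ and $\mathbf{Ker}\,\alpha$ has a convex transversal on which $\alpha$ acts isometrically, the natural candidate $\alpha'$ defined on $\im\,\alpha$ by inverting the isometry $\alpha|_{T_\alpha}$ and sending everything to $T_\alpha$ is again order preserving or reversing, a contraction, regular, and has a convex transversal (namely $\im\,\alpha$, which is convex by Lemma~\ref{tofa}); so $\alpha'\in S$ and $\alpha\alpha'\alpha=\alpha$.

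The hard part will be the bookkeeping in the reduction $S=\langle E(S)\rangle$ (or the direct closure argument): one must keep careful track of the convexity of the transversal and of the image under composition, and handle the order-reversing cases, which is where sign bookkeeping for the isometry $\alpha|_{T_\alpha}$ and the fixed-point conditions interact. Once that is in hand, Proposition~\ref{idp1} and Lemma~\ref{kkk} finish the argument immediately, with no further computation.
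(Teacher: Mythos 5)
Your structural diagnosis is sound and in fact sharper than the paper's own argument: the paper derives the theorem from Proposition~\ref{idp1} and Lemma~\ref{kkk} in a single line, and you correctly observe that those two results by themselves only yield regularity of $\langle E(S)\rangle$, not that $S=SReg(\mathcal{ORCP}_{n})$ itself is a (regular) subsemigroup. However, the primary route you propose for closing that gap --- writing every strongly regular $\alpha$ as a product of at most two strongly regular idempotents, so that $S=\langle E(S)\rangle$ --- fails. The translation $\alpha=\left(\begin{array}{cc}1&2\\2&3\end{array}\right)$ is strongly regular, yet it is not a product of idempotents of $\mathcal{ORCP}_{n}$ at all: if an idempotent $\epsilon$ of $\mathcal{ORCP}_{n}$ separates two adjacent points $u,u+1$ of its domain, then (kernel classes being convex) $u=\max A_{i}$ and $u+1=\min A_{i+1}$, the contraction condition forces $|u\epsilon-(u+1)\epsilon|=1$, and stationarity ($x_{i}\in A_{i}$, $x_{i+1}\in A_{i+1}$) then forces $u\epsilon=u$ and $(u+1)\epsilon=u+1$; hence any product of idempotents that is injective on $\{1,2\}$ must fix both $1$ and $2$. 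This also pinpoints the flaw in your factorisation: the factor ``carrying $T_{\alpha}$ isometrically onto $\im~\alpha$'' is a translation or a reflection, and such a map is an idempotent only when it is a partial identity.

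Your fallback --- proving directly that $SReg(\mathcal{ORCP}_{n})$ is closed under multiplication --- is the right statement to aim at (it is what the theorem actually requires, together with your correct final step producing an inverse $\alpha'\in S$ for each $\alpha\in S$), but it cannot be obtained by running the proof of Lemma~\ref{kkk} ``verbatim'': that proof uses the stationarity of the blocks (e.g.\ $D_{1}\epsilon\tau=\max D_{1}$) and the hypothesis $F(\epsilon)\cap F(\tau)\neq\emptyset$, neither of which is available for arbitrary strongly regular factors; one must instead track the two translations/reflections and the overlap of $\im~\alpha$ with $\dom~\beta$ explicitly. So the proposal correctly identifies what is missing from the paper's one-line deduction, but the decomposition it leans on is false and the stated alternative is not yet an argument.
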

\noindent {\bf Acknowledgements.} The second named author would like to thank Bayero University and TET Fund for financial support. He would also like to thank The Petroleum Institute, Khalifa University of Science and Technology for hospitality during his 3-month research visit to the institution.

\end{document}